\def\qed{\hfill {\hbox{${\vcenter{\vbox{               
   \hrule height 0.4pt\hbox{\vrule width 0.4pt height 6pt
   \kern5pt\vrule width 0.4pt}\hrule height 0.4pt}}}$}}}
\def\tr{\triangleright}
\def\mb{\mathbf}
\newtheorem{theorem}{Theorem}
\newtheorem{definition}{Definition}
\newtheorem{lemma}[theorem]{Lemma}
\newtheorem{proposition}[theorem]{Proposition}
\newtheorem{corollary}[theorem]{Corollary}
\newtheorem{example}{Example}
\newtheorem{remark}{Remark}
\newenvironment{proof}[1][Proof]{\smallskip\noindent{\bf #1.}\quad}%
{\qed\par\medskip}
\date{}
\title{\Large \textbf{$(t,s)$-racks and their link invariants}}
\author{Jessica Ceniceros\footnote{Email: \texttt{jceniceros11@students.claremontmckenna.edu}}
 \and Sam Nelson\footnote{Email: \texttt{knots@esotericka.org}}}
\begin{document}
\maketitle

\begin{abstract}
A \textit{$(t,s)$-rack} is a rack structure defined on a module over the 
ring $\ddot\Lambda=\mathbb{Z}[t^{\pm 1},s]/(s^2-(1-t)s)$. We identify necessary 
and sufficient conditions for two $(t,s)$-racks to be isomorphic.
We define enhancements of the rack counting invariant using the 
structure of $(t,s)$-racks and give some computations and examples.
As an application, we use these enhanced invariants to obtain 
obstructions to knot ordering.
\end{abstract}

\begin{center}
\parbox{6in}{
\textsc{Keywords:} Finite racks, $(t,s)$-racks, Alexander quandles,
link invariants, enhancements of counting invariants

\textsc{2010 MSC:} 57M27, 57M25
}
\end{center}

\section{\large\textbf{Introduction}}

Introduced in \cite{FR}, the \textit{fundamental rack} of a framed 
link is a complete invariant of unsplit framed links in $S^3$ up to 
homeomorphism of $S^3$. Counting homomorphisms from a fundamental 
rack into a finite rack $X$ yields an invariant of framed isotopy. In \cite{N} 
it is shown that this counting invariant is periodic with respect to 
framings modulo an integer $N(X)$ known as the \textit{rack rank} of $X$, 
and that summing these counting invariants over a complete period of framings
module $N(X)$ yields an invariant of ambient isotopy.

In this paper we study a type of rack structure on modules over the ring
$\ddot\Lambda=\mathbb{Z}[t^{\pm 1},s]/(s^2-(1-t)s)$ known as 
\textit{$(t,s)$-racks} and the counting invariants they define. We obtain a
result specifying necessary and sufficient conditions for two $(t,s)$-racks 
to be isomorphic, similar to results for Alexander quandles and Alexander 
biquandles in \cite{N1} and \cite{LN} respectively.
We are able to exploit the module structure of these racks to enhance the 
rack counting invariant, yielding stronger invariants which 
specialize to the unenhanced counting invariant. 

The paper is organized as follows. In section \ref{sec2} we review the basics 
of racks and the rack counting invariant. In section \ref{sec3} we introduce
$(t,s)$-racks and provide necessary and sufficient conditions for two 
$(t,s)$-racks to be isomorphic. In section \ref{sec4} we define the new 
enhanced invariants, give examples and provide an application to knot ordering. 
In section \ref{sec5} we  collect questions for future research.

\section{\large\textbf{Rack basics}} \label{sec2}

\begin{definition}
\textup{A \textit{rack} is a set $X$ with two binary operations $\tr$,
$\tr^{-1}$ satisfying for all $x,y\in X$
\begin{list}{}{}
\item[(i)]{$(x\tr y)\tr^{-1}y = x=(x\tr^{-1} y)\tr y$ and}
\item[(ii)]{$(x\tr y)\tr z=(x\tr z)\tr(y\tr z)$.}
\end{list}
It follows from (i) and (ii) (see \cite{N}) that the \textit{kink map} 
$\pi:X\to X$ defined 
by $\pi(x)=x\tr x$ is a bijection. For every element $x\in X$, the 
\textit{rack rank} of $x$ is the smallest integer $N(x)\ge 1$ such that 
$\pi^{N(x)}(x)=x$ or $\infty$ if no such $N(x)$ exists, and the 
\textit{rack rank} of $X$ is smallest positive integer $N(X)$ such that 
$\pi^{N(X)}(x)=x$ for all $x\in X$, or $\infty$ if no such $N$ exists. 
A rack with rack rank $N=1$ is a \textit{quandle}. We will denote the
kink map in $X$ by $\pi_X:X\to X$ when necessary to distinguish it from
the kink maps of other racks.}
\end{definition}

We have the following standard result (or see \cite{N}):

\begin{lemma}
If $X$ is a finite rack, then $N(x)\ne \infty$ for all $x\in X$ and 
$N(X)=\mathrm{lcm}\{N(x)\ |\ x\in X\}$.
\end{lemma}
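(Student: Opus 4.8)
The plan is to use the fact, noted in the definition, that the kink map $\pi_X$ is a bijection of $X$; since $X$ is finite this makes $\pi_X$ a permutation of a finite set, and both claims then reduce to standard statements about orders of permutations once $N(x)$ is identified with the length of the $\pi_X$-orbit of $x$.

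First I would show $N(x)\ne\infty$. Because $X$ is finite, among the iterates $x,\pi_X(x),\pi_X^2(x),\dots$ two values must coincide, say $\pi_X^i(x)=\pi_X^j(x)$ with $i<j$; applying $\pi_X^{-i}$ (which exists since $\pi_X$ is a bijection) gives $\pi_X^{j-i}(x)=x$, so some positive power of $\pi_X$ fixes $x$ and $N(x)$ is finite. The same injectivity argument shows the $\pi_X$-orbit of $x$ is precisely $\{x,\pi_X(x),\dots,\pi_X^{N(x)-1}(x)\}$, a set of $N(x)$ distinct elements. Next I would record the divisibility fact: for $m\ge 1$, writing $m=qN(x)+r$ with $0\le r<N(x)$ gives $\pi_X^m(x)=\pi_X^r(x)$, which equals $x$ exactly when $r=0$ by minimality of $N(x)$; hence $\pi_X^m(x)=x$ if and only if $N(x)\mid m$.

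To finish, observe that $\pi_X^m(x)=x$ for all $x\in X$ holds if and only if $N(x)\mid m$ for every $x$, which in turn holds if and only if $\mathrm{lcm}\{N(x)\ |\ x\in X\}$ divides $m$. Since $X$ is finite this lcm is a well-defined positive integer, and by the preceding sentence it is the smallest positive $m$ with $\pi_X^m=\mathrm{id}_X$; that smallest value is by definition $N(X)$, so $N(X)=\mathrm{lcm}\{N(x)\ |\ x\in X\}$. The argument is routine permutation theory, so there is no real obstacle; the only step requiring a little care is the appeal to injectivity of $\pi_X$ to guarantee that the orbit sequence returns to $x$ itself (rather than into a pre-period), which is precisely what legitimizes treating $N(x)$ as the orbit length.
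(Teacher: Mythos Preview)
Your proof is correct and complete. The paper's argument takes a slightly different tack: instead of working directly with the kink map $\pi$, it introduces the right-translation maps $f_x(y)=y\tr x$, observes that each is an element of $S_{|X|}$ and hence has finite order, and concludes from there. Both arguments rest on the same underlying principle---a permutation of a finite set has finite order, and that order is the lcm of its cycle lengths---but your version is more self-contained and more precisely tied to the definition of $N(x)$ as the length of the $\pi$-orbit of $x$. The paper's use of $f_x$ is terse and leaves the connection between the order of $f_x$ and the quantity $N(x)$ unexplained (indeed, they are not the same in general); your approach sidesteps that detour entirely by going straight through $\pi$, and your explicit division-with-remainder step makes the lcm conclusion airtight.
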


\begin{proof}
Let $X$ be a finite rack and condsider the map $f_x:X\to X$ defined by 
$f_x(y)=y\tr x$. For each $x$, $f_x$ is a element of the symmetric group 
$S_{|X|}$ and hence has finite order equal to $N(x)$. Since $N(x)$ must divide
$N(X)$, for all $x\in X$, we must have $N(X)=\mathrm{lcm}\{N(x)\ |\ x\in X\}$.
\end{proof}

As with other algebraic structures, we have some useful standard concepts:
\begin{definition}
\textup{Let $X$ and $Y$ be racks.}
\begin{list}{$\bullet$}{}
\item \textup{A \textit{subrack} of $X$ is a subset $S\subset X$ which is 
itself a rack under the rack operations $\tr,\tr^{-1}$ inherited from $X$. For
$S\subset X$ to be a subrack, it is sufficient for $S$ to be closed under the 
rack operations $\tr$ and $\tr^{-1}$. If the rack rank of $X$ is finite, 
then closure under $\tr$ implies closure under $\tr^{-1}$.}
\item \textup{A \textit{rack homomorphism} is a map $f:X\to Y$ satisfying 
for all $x,y\in X$} \[f(x\tr y)=f(x)\tr f(y)\quad  \mathrm{and} \quad
f(x\tr^{-1} y)=f(x)\tr^{-1} f(y).\]
\item \textup{The \textit{image} of a homomorphism $f:X\to Y$ is the set
$\mathrm{Im}(f)=\{y\in Y\ | \ y=f(x) \ \mathrm{for\ some} \ x\in X \}$; it 
is straightforward to show that $\mathrm{Im}(f)$ is a subrack of $Y$.}
\end{list}
\end{definition}

We will find the following observations useful in section \ref{sec2}.

\begin{lemma} \label{lem:qord}
Let $f:X\to Y$ be a rack homomorphism. Then for any $x\in X$, the
rack rank of $f(x)$ divides the rack rank of $x$.
\end{lemma}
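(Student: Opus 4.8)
The claim is: if $f: X \to Y$ is a rack homomorphism and $x \in X$, then $N(f(x)) \mid N(x)$. The natural approach is to show that $f$ intertwines the kink maps, i.e. that $f \circ \pi_X = \pi_Y \circ f$, and then iterate.

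The plan is to first establish the commutation relation $f(\pi_X(x)) = \pi_Y(f(x))$ for all $x \in X$. This is immediate from the definition of a rack homomorphism: $f(\pi_X(x)) = f(x \tr x) = f(x) \tr f(x) = \pi_Y(f(x))$. Iterating this identity gives $f(\pi_X^k(x)) = \pi_Y^k(f(x))$ for every $k \ge 1$ by a trivial induction. Now set $n = N(x)$, which is finite when $X$ is finite (by the preceding lemma) but the argument works verbatim whenever $N(x) < \infty$. By definition of $N(x)$ we have $\pi_X^n(x) = x$, hence $\pi_Y^n(f(x)) = f(\pi_X^n(x)) = f(x)$, so $N(f(x))$ is finite and, being the smallest positive integer $m$ with $\pi_Y^m(f(x)) = f(x)$, it must divide $n = N(x)$. (The divisibility follows from the standard fact that the set of exponents $m$ with $\pi_Y^m(f(x)) = f(x)$ is closed under subtraction, hence is exactly the set of multiples of $N(f(x))$.)

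There is essentially no obstacle here; the only point requiring a word of care is that $N(x)$ or $N(f(x))$ could a priori be $\infty$. If $N(x) = \infty$ the statement is vacuous (everything divides $\infty$), and if $N(x) < \infty$ the computation above forces $N(f(x)) < \infty$ as well, so the divisibility statement is meaningful. One might also note explicitly that $\pi_Y$ restricted to the subrack $\mathrm{Im}(f)$ agrees with the kink map of that subrack, though this is not strictly needed. I expect the write-up to be three or four lines.
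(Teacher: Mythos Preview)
Your proposal is correct and follows exactly the same approach as the paper: establish $f(\pi_X(x))=\pi_Y(f(x))$ from the homomorphism condition, iterate, and conclude divisibility. Your version is in fact slightly more careful than the paper's (you handle the $N(x)=\infty$ case explicitly and spell out the induction), but the argument is identical in substance.
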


\begin{proof} Let $f:X\to Y$ be a rack homomorphism. Then for any $x\in X$
we have 
\[f(\pi_X(x))=f(x\tr x)=f(x)\tr f(x)=\pi_Y(f(x)).\]
Then if $\pi_X^N(x)=x$, we have $\pi_Y^N(f(x))=f(\pi_X^N(x))=f(x)$ so
$\pi_Y^{N(X)}(f(x))=f(x)$, and $N(f(x))|N(x)$ as required.
\end{proof}

\begin{corollary}
If $f:X\to Y$ is an isomorphism of racks then $\pi_X=f^{-1}\pi_Yf$, i.e.
the kink maps of $X$ and $Y$ are conjugate. 
\end{corollary}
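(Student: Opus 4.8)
The plan is to reuse the intertwining relation between kink maps that already surfaced inside the proof of Lemma~\ref{lem:qord}. There we noted that for \emph{any} rack homomorphism $g\colon A\to B$ and any $a\in A$ one has $g(\pi_A(a))=g(a\tr a)=g(a)\tr g(a)=\pi_B(g(a))$; equivalently $g\circ\pi_A=\pi_B\circ g$ as maps $A\to B$. So I would begin by recording this identity for our isomorphism $f\colon X\to Y$, namely $f\circ\pi_X=\pi_Y\circ f$.

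Next I would invoke that $f$, being an isomorphism, is in particular a bijection and hence admits a two-sided inverse $f^{-1}\colon Y\to X$. Composing the relation $f\circ\pi_X=\pi_Y\circ f$ on the left with $f^{-1}$ yields $\pi_X=f^{-1}\circ\pi_Y\circ f$, which is precisely the asserted conjugacy of the kink maps. I would point out that the only property of $f$ used beyond being a homomorphism is invertibility; in particular one need not separately check that $f^{-1}$ is a rack homomorphism, although it is, and that would also give the symmetric relation $\pi_Y=f\circ\pi_X\circ f^{-1}$.

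There is essentially no obstacle: the entire content is the bookkeeping identity $f\circ\pi_X=\pi_Y\circ f$, which is immediate from $f(x\tr x)=f(x)\tr f(x)$, and the remainder is formal manipulation of invertible maps. The single point deserving a word of care is that the statement presupposes $\pi_X$ and $\pi_Y$ are genuine self-maps that can be conjugated — and indeed they are bijections, as recorded in the Definition via rack axioms (i) and (ii) — but this is already in hand and nothing further is needed.
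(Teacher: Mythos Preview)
Your proposal is correct and follows exactly the intended line: the paper states this result as an immediate corollary of Lemma~\ref{lem:qord} without giving a separate proof, and your argument simply spells out the one-line deduction from the identity $f\circ\pi_X=\pi_Y\circ f$ established there. Nothing more is needed.
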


\begin{corollary}
If two racks $X$ and $Y$ are isomorphic, then the rack ranks of $X$ and $Y$ are
equal.
\end{corollary}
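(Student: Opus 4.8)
The plan is to deduce this directly from the preceding results, since an isomorphism $f:X\to Y$ is in particular a rack homomorphism whose inverse $f^{-1}:Y\to X$ is also a rack homomorphism.

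First I would apply Lemma~\ref{lem:qord} to $f$ to obtain $N(f(x))\mid N(x)$ for every $x\in X$, and then apply it to $f^{-1}$ to obtain $N(f^{-1}(y))\mid N(y)$ for every $y\in Y$. Substituting $y=f(x)$ into the second divisibility and using $f^{-1}(f(x))=x$ yields $N(x)\mid N(f(x))$; combined with the first, this gives $N(x)=N(f(x))$ for all $x\in X$. Since $f$ is a bijection, it follows that $\{N(x)\mid x\in X\}=\{N(y)\mid y\in Y\}$ as sets (indeed as multisets), so in particular these two sets have the same least common multiple.

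To finish I would invoke the characterization of the rack rank: $N(X)$ is the smallest positive integer $n$ with $\pi_X^n(x)=x$ for all $x$, equivalently the least common multiple of $\{N(x)\mid x\in X\}$, with the convention that the lcm of an unbounded set, or of any set containing $\infty$, is $\infty$. Equality of these lcms then gives $N(X)=N(Y)$. Alternatively — and perhaps more cleanly — I would use the previous corollary: from $\pi_Y=f\pi_Xf^{-1}$ we get $\pi_Y^n=f\pi_X^nf^{-1}$ for every $n\ge 1$, so $\pi_X^n=\mathrm{id}_X$ if and only if $\pi_Y^n=\mathrm{id}_Y$; the two sets $\{n\ge 1:\pi_X^n=\mathrm{id}_X\}$ and $\{n\ge 1:\pi_Y^n=\mathrm{id}_Y\}$ therefore coincide, and their common minimum (or $\infty$ if both are empty) is exactly $N(X)=N(Y)$.

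The computation here is entirely routine; the only point requiring a little care is the bookkeeping around possibly infinite rack ranks, since the lcm description of $N(X)$ from the earlier lemma was stated only for finite racks. This is why the formulation in terms of the set $\{n:\pi_X^n=\mathrm{id}\}$ is convenient: it treats the finite and infinite cases uniformly and avoids any appeal to that lemma.
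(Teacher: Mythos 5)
Your proof is correct, and it follows essentially the route the paper intends: the corollary is stated there without proof as an immediate consequence of Lemma~\ref{lem:qord} and the preceding corollary on conjugate kink maps, which are exactly the two arguments you give. Your second formulation, via $\pi_Y^n=f\pi_X^nf^{-1}$ and the set $\{n\ge 1:\pi_X^n=\mathrm{id}\}$, is the cleaner of the two since it handles the possibly infinite rack rank uniformly, as you note.
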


The rack axioms come from the blackboard-framed oriented 
Reidemeister moves where we interpret $x\tr y$ as the arc resulting
from $x$ crossing under $y$ from right to left with respect to the
orientation of the overcrossing strand and $x\tr^{-1} y$ as crossing
under from left to right \cite{FR}.
\[\includegraphics{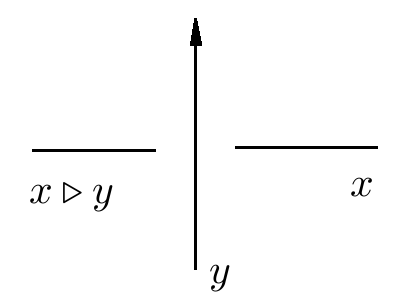}\quad 
\includegraphics{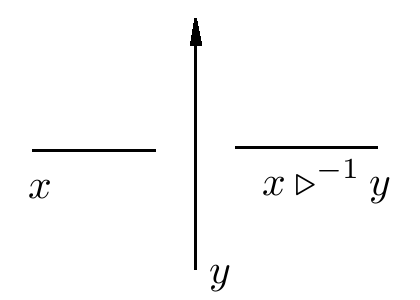}\]

Axiom (i) comes from Reidemeister move II, axiom (ii) comes from the
oriented Reidemeister III move with all positive crossings. 

\[\includegraphics{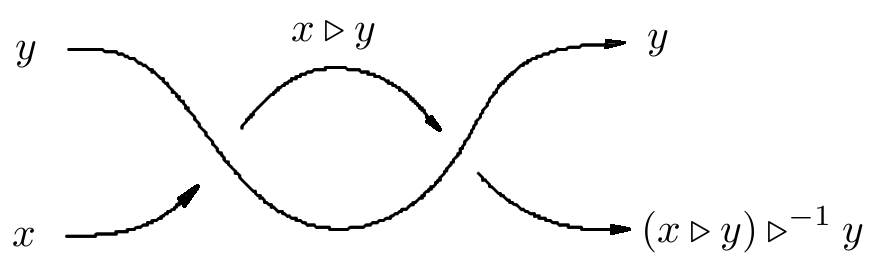}\quad 
\includegraphics{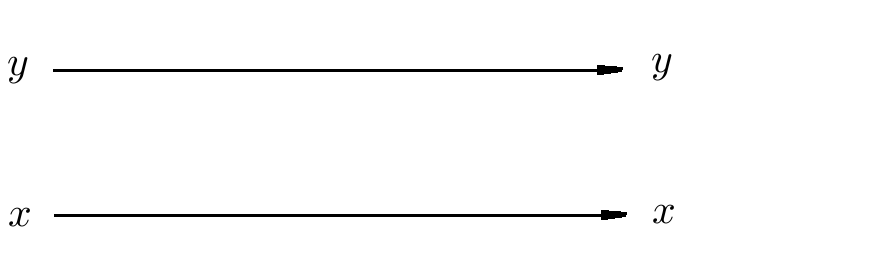}\]

\[\includegraphics{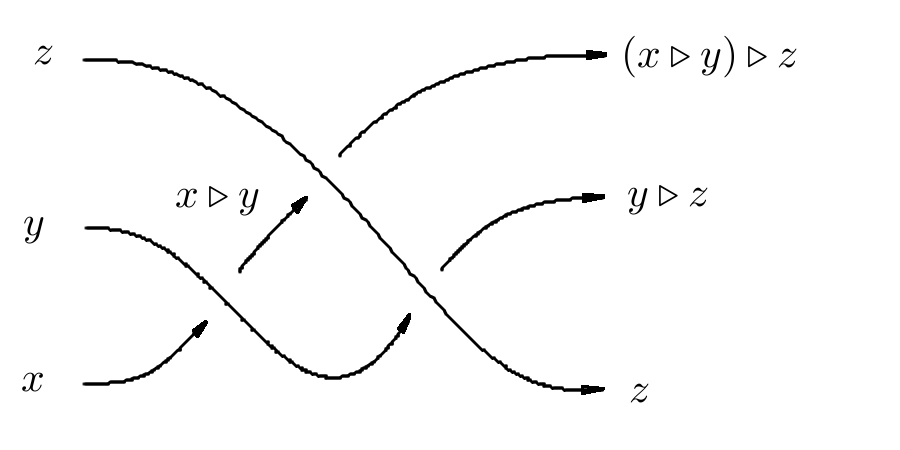}\quad 
\includegraphics{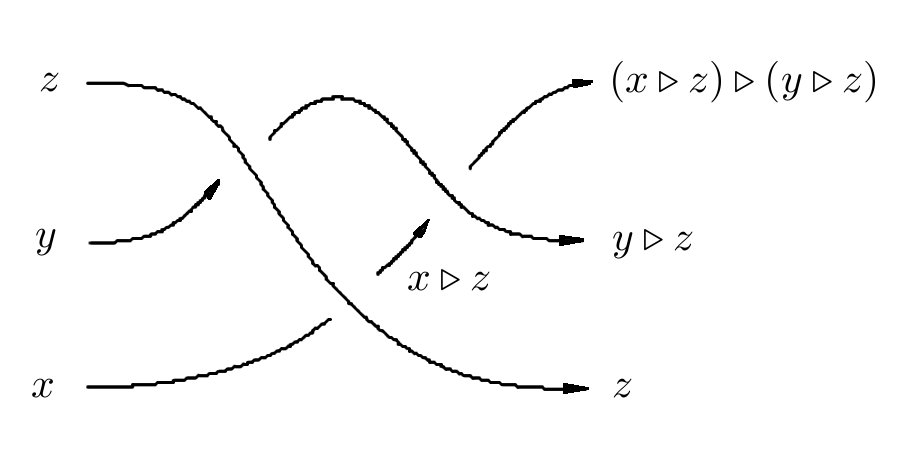}\]

The other oriented Reidemeister III moves follow from the listed moves, 
with corresponding rack equations such as 
\[(x\tr y)\tr^{-1} z=(x\tr^{-1}z)\tr(y\tr^{-1}z).\]
See \cite{FR} for more.

The blackboard-framed oriented Reidemeister I moves do not impose any
additional axioms, but provide a visual interpretation of the kink map:
$\pi(x)$ is the result of $x$ going through a positive-writhe kink,
and $\pi^{-1}(x)$ is the result of going through a negative-writhe
kink.

\[\includegraphics{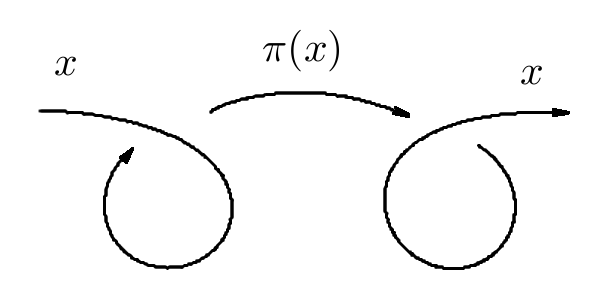}\quad 
\includegraphics{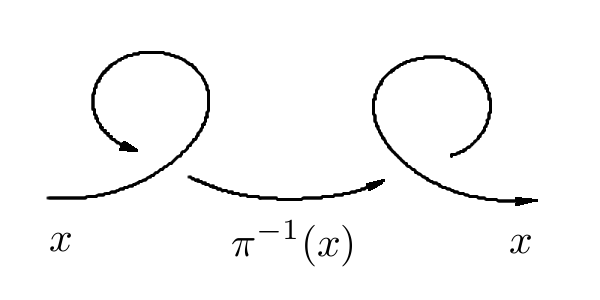}\]

Standard examples of rack structures include:
\begin{list}{$\bullet$}{}
\item{\textit{Constant action racks:} A set $X$ with a bijection 
$\sigma:X\to X$ is a rack with $x\tr y=\sigma(x)$,}
\item{\textit{Conjugation racks:} A group $G$ is a rack with 
$x\tr y=y^{-n}xy^n$ for each $n\in \mathbb{Z}$,}
\item{\textit{Coxeter racks:} The subset $S\subset V$ of an 
$\mathbb{F}$-vector space $V$ which is non-degenerate with respect to 
a symmetric bilinear form $\langle,\rangle:V\times V\to \mathbb{F}$ is a
rack with
\[\mb{x}\tr\mb{y}=\mb{y}-2\frac{\langle \mb{x},\mb{y} 
\rangle}{\langle \mb{x},\mb{x} \rangle}\mb{x},\]}
\item{\textit{Fundamental rack of a link $L$:} Let $L$ be a blackboard-framed
oriented link diagram and let $G$ be a set of generators corresponding 
bijectively with the set of arcs in $L$. Define the set $W(G)$ of 
\textit{rack words in $G$} recursively by the rules
\begin{itemize}
\item $g\in G\ \Rightarrow\ g\in W(G)$ and
\item $g,h\in W(G)\ \Rightarrow\ g\tr h\in W(G)\ \mathrm{and} 
\ g\tr^{-1} h\in W(G)$.
\end{itemize}
Let $\sim$ be the equivalence relation on $W(G)$ generated by the the
rack axioms (e.g., $(x\tr y)\tr z\sim (x\tr z)\tr(y\tr z)$ etc.) together 
with the crossing relations in $L$, i.e., for every crossing  
\[\includegraphics{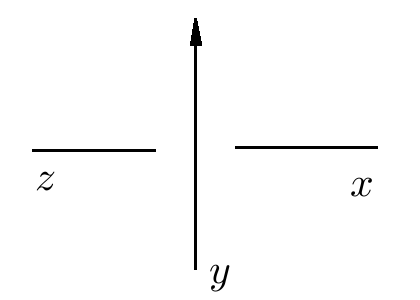}\]
we obtain a relation $z\sim x\tr y$ or, equivalently, $x\sim z\tr^{-1} y$.
Then the set $FR(L)=W(G)/\sim$ of equivalence classes in $W(G)$  modulo the 
equivalence relation $\sim$ is a rack under the operations 
\[[x]\tr [y]=[x\tr y] \quad \mathrm{and} \quad
[x]\tr^{-1} [y]=[x\tr^{-1} y]\]
where $[x]$ is the equivalence class of $x$ in $W(G)/\sim$. In particular,
the fundamental racks of any two oriented blackboard-framed link diagrams
related by blackboard framed Reidemeister moves are isomorphic.}
\end{list}

This last example is especially important; in \cite{FR} it is shown that the
Fundamental Rack of a framed link is a complete invariant for unsplit framed
links, up to homeomorphism of the ambient 
space $S^3$. For example, the blackboard-framed trefoil below
has fundamental rack with the listed presentation: 
\[\includegraphics{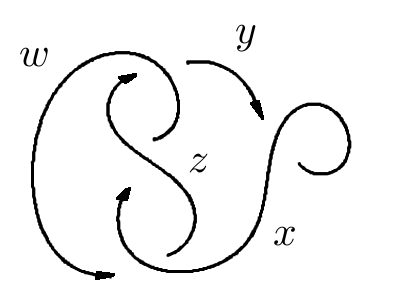} \quad \raisebox{0.5in}{$FR(K)=
\langle x,y,z,w  \ |\  y\tr x=x, z\tr x=w, w\tr z=x, y\tr w=z \rangle.$}\]

A finite rack $X=\{x_1,\dots, x_n\}$ can be expressed in an algebra-agnostic
way using a \textit{rack matrix} which encodes the operation table of $X$.
Specifically, the entry in row $i$ column $j$ of $M_X$ is $k$ where 
$x_i\tr x_j=x_k$. For example, the constant action rack on $X=\{x_1,x_2,x_3\}$
with $\sigma=(123)$ has rack matrix
\[M_X=\left[\begin{array}{ccc}
2 & 2 & 2 \\
3 & 3 & 3 \\
1 & 1 & 1 \\
\end{array}\right].\]
The kink map $\pi(x)$ is the permutation along the diagonal of the rack 
matrix; in this example, $\pi=(123)$ and $N=3$.

By construction, any labeling of a diagram $D$ of a blackboard-framed oriented 
link $L$ with elements of a rack $X$ satisfying the crossing condition at 
every crossing corresponds to a unique such labeling on any diagram obtained 
from $D$ by a blackboard-framed Reidemeister move. More abstractly, such a 
labeling is an assignment of an image $f(g)=x_i\in X$ to each generator $g$ 
of $FR(L)$, and satisfaction of the crossing conditions says that $f$  
defines a unique homomorphism of racks $f:FR(L)\to X$. The set of such 
labelings or homomorphisms is an invariant of blackboard framed isotopy 
denoted 
\[\mathrm{Hom}(FR(L),X)=\{f:FR(L)\to X\ |\ f(x\tr y)=f(x)\tr f(y)\}.\]
The cardinality $|\mathrm{Hom}(FR(L),X)|$ is a numerical invariant known as the 
\textit{basic counting invariant}.

For a finite rack $X$, the rack rank $N$ is always finite -- indeed, $N$ is
the exponent or order of the kink map $\pi:X\to X$ considered as an element 
of the
symmetric group $S_{|X|}$. The finiteness of $N$ for a rack $X$ implies that
the basic counting invariants are periodic in the writhe $w$ of each component
of $L$ with period $N$ -- in particular, the basic counting invariant is
preserved by \textit{$N$-phone cord moves}:
\[\includegraphics{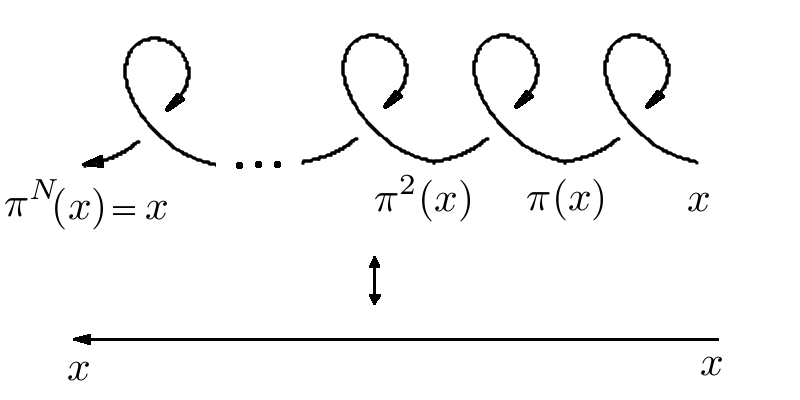}\]

Thus, if we let $W=(\mathbb{Z}_N)^c$ where $c$ is the number of components
of $L$ and let $D(L,\mb{w})$ be a diagram of $L$ with framing vector
$\mb{w}$ for a fixed ordering of the components of $L$, then the number
\[\Phi^{\mathbb{Z}}_{X}(L)=\sum_{\mb{w}\in W} |\mathrm{Hom}(FR(D(L,\mb{w})),X)|\]
is an invariant of the unframed link $L$ called the \textit{integral rack
counting invariant}. In the special case that $X$ is a
quandle, i.e. $N=1$, this is just the basic counting invariant
$|\mathrm{Hom}(FR(L),X)|$.

\begin{example}\label{ex1}
\textup{The constant action rack $X$ with $\sigma=(12)$ has rack rank 2. We can
interpret the rack operation as a labeling rule which says that each time
an arc goes under a crossing, the label switches from $1$ to $2$ or from
$2$ to $1$. Since we have $N=2$, to compute $\Phi^{\mathbb{Z}}_X(L)$ we need
a complete set of framing vectors over $(\mathbb{Z}_2)^c$. For example, the
Hopf link $H_2$ and the 2-component unlink $U_2$ both have four labelings 
by $X$, but they
occur in different framings, with the only valid labelings of the unlink
occurring in framing $(0,0)\in(\mathbb{Z}_2)^2$ and those of the Hopf link 
occurring in framing $(1,1)\in(\mathbb{Z}_2)^2$. }
\[\includegraphics{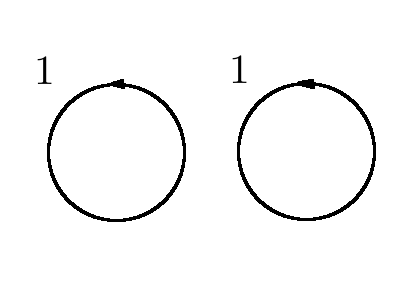} \quad
\includegraphics{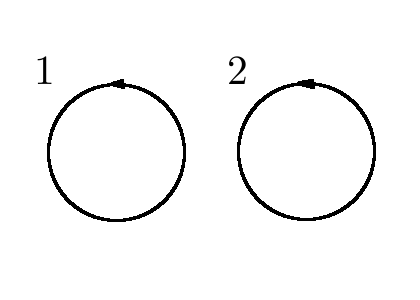} \quad
\includegraphics{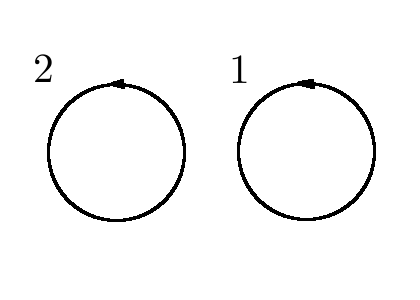} \quad
\includegraphics{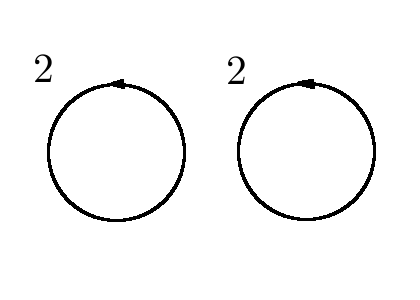} \quad
\]
\[\includegraphics{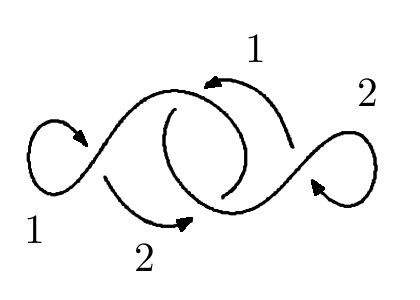} \quad
\includegraphics{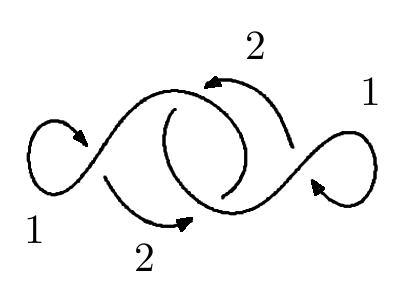} \quad
\includegraphics{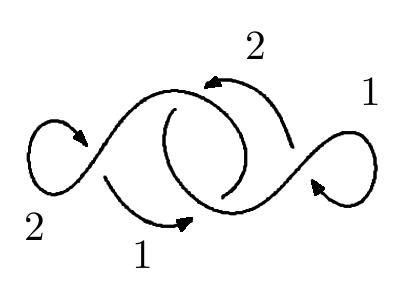} \quad
\includegraphics{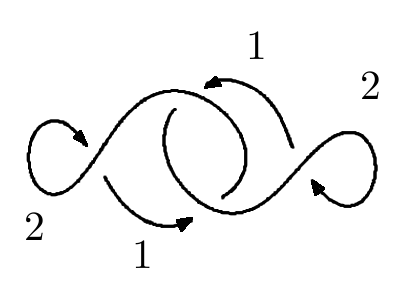} \quad
\]
\end{example}

In example \ref{ex1}, the integral counting invariants defined by the 
given rack $X$ do not distinguish the two links, but we can define an 
\textit{enhancement}, i.e. a stronger invariant with the original invariant
as a specialization, which does (see\cite{N}):

\begin{definition}
\textup{Let $X$ be a rack with rack rank $N$, $L$ a link of $c$ components,
$W=(\mathbb{Z}_N)^c$ and for $\mb{w}=(w_1,\dots, w_c)\in W$ let 
$q^{\mb{w}}=q_1^{w_1}\dots q_c^{w_c}$. Then the \textit{writhe-enhanced rack 
counting invariant} of $L$ defined by $X$ is}
\[\phi^W_X(L)=\sum_{\mb{w}\in W}|\mathrm{Hom}(FR(D(L,\mb{w})),X)|q^{\mb{w}}. \]
\end{definition}

\begin{example}
\textup{In example \ref{ex1}, we had 
$\Phi^{\mathbb{Z}}_X(U_2)=4=\Phi^{\mathbb{Z}}_X(H_2)$; the writhe-enhanced 
invariant detects the difference, with 
$\Phi^{W}_X(U_2)=4\ne4q_1q_2=\Phi^{W}_X(H_2)$.}
\end{example}

\section{\large\textbf{$(t,s)$-racks}}
\label{sec3}

We will now focus on a particular type of rack described in
\cite{FR}.
Let $\ddot\Lambda=\mathbb{Z}[t^{\pm 1},s]/(s^2-(1-t)s)$ and similarly
let $\ddot\Lambda_n=\mathbb{Z}[t^{\pm 1},s]/(n,s^2-(1-t)s)
=\mathbb{Z}_n[t^{\pm 1},s]/(s^2-(1-t)s)$.

\begin{definition}
\textup{Let $X$ be a module over $\ddot\Lambda$. Then $X$ is a rack with
\[x\tr y=tx+sy,\quad \mathrm{and}\quad x\tr^{-1}y = t^{-1}(x-sy)\]
known as a \textit{$(t,s)$-rack} \cite{FR}. If $s=1-t$ then $X$ is a 
quandle known as an \textit{Alexander quandle}.}
\end{definition} 

\begin{lemma}
\textup{If $X$ is a $(t,s)$-rack, then $\pi(x)=(t+s)x$.}
\end{lemma}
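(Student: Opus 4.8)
The plan is to unwind the two relevant definitions directly. By the definition of a rack the kink map is $\pi(x)=x\tr x$, and by the definition of a $(t,s)$-rack the operation is $x\tr y=tx+sy$. So the proof amounts to substituting $y=x$ into the operation formula, $\pi(x)=x\tr x=tx+sx$, and then applying the distributive law of the $\ddot\Lambda$-module $X$ to rewrite $tx+sx$ as $(t+s)x$, where $t+s$ is now regarded as an element of $\ddot\Lambda$. That is the entire argument. I would point out that the defining relation $s^2-(1-t)s=0$ plays no role here: it is precisely the identity needed to verify that $x\tr y=tx+sy$ is self-distributive (rack axiom (ii)), but computing $\pi$ uses only the module structure.

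There is no real obstacle; the single point deserving a word of care is notational, namely that $(t+s)x$ is to be read as the action of the ring element $t+s$ on the module element $x$, and that the equality $tx+sx=(t+s)x$ is one of the module axioms. I would phrase the one computation accordingly so the statement is unambiguous, and invoke the preceding Definition to know that $X$ is indeed a rack so that $\pi$ makes sense.

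As a consistency check — which I would include only as a closing remark, or omit — the formula agrees with the fact (recorded in the rack definition, following \cite{FR} and \cite{N}) that $\pi$ is a bijection: here $\pi$ is multiplication by $t+s$, and $t+s$ is a unit of $\ddot\Lambda$ with inverse $t^{-1}(1-s)$, since $(t+s)t^{-1}(1-s)=1+t^{-1}s-s-t^{-1}s^2=1+t^{-1}s-s-t^{-1}(1-t)s=1$. Equivalently, one computes $\pi^{-1}(x)=x\tr^{-1}x=t^{-1}(x-sx)=t^{-1}(1-s)x$ straight from the inverse operation, which recovers the same inverse.
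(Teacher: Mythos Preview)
Your proof is correct and follows exactly the same approach as the paper: compute $\pi(x)=x\tr x=tx+sx=(t+s)x$ directly from the definitions. The additional remarks about the relation $s^2=(1-t)s$ being unnecessary here and the consistency check that $t+s$ is a unit are accurate observations, though the paper's proof omits them and simply records the one-line computation.
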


\begin{proof} Let $X$ be a $(t,s)$-rack. Then for any $x\in X$, we have
\[\pi(x)=x\tr x=tx+sx=(t+s)x.\]
\end{proof}

\begin{corollary}
For any $(t,s)$-rack $X$, the rack rank $N(X)$ is the minimal integer $N\ge 1$
such that $(t+s)^Nx=x$ for all $x\in X$.
\end{corollary}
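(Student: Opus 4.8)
The plan is to combine the immediately preceding lemma, which identifies the kink map of a $(t,s)$-rack with multiplication by $t+s$, with the definition of rack rank. The key observation is that since $t+s$ is an element of the ring $\ddot\Lambda$, iterating $\pi$ merely multiplies by successive powers of $t+s$, so the condition defining $N(X)$ translates directly into a condition on powers of $t+s$.

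First I would establish by induction on $N\ge 1$ that $\pi^N(x)=(t+s)^N x$ for every $x\in X$. The base case $N=1$ is exactly the preceding lemma. For the inductive step, supposing $\pi^N(x)=(t+s)^N x$ for all $x$, I would apply $\pi$ to both sides and invoke the lemma once more, now with argument $(t+s)^N x$, together with associativity of the $\ddot\Lambda$-action:
\[ \pi^{N+1}(x)=\pi((t+s)^N x)=(t+s)((t+s)^N x)=(t+s)^{N+1}x. \]

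With this identity in hand, the corollary is immediate from unwinding the definition of rack rank: $N(X)$ is by definition the smallest integer $N\ge 1$ such that $\pi^N(x)=x$ for all $x\in X$ (or $\infty$ if no such $N$ exists). Replacing $\pi^N(x)$ by $(t+s)^N x$, the defining condition becomes $(t+s)^N x=x$ for all $x\in X$, which is precisely the asserted characterization. There is essentially no obstacle here; the only point requiring a moment's care is that for an infinite module $X$ no such $N$ need exist, in which case both sides are to be read as $\infty$, consistent with the convention fixed in the definition of rack rank.
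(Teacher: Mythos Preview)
Your proof is correct and is exactly the argument the paper intends: the corollary is stated without proof immediately after the lemma $\pi(x)=(t+s)x$, and your induction together with the definition of rack rank is the obvious (and only reasonable) way to fill in the details.
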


Let $X=R$ for a commutative ring $R$. We can make $X$ a $(t,s)$-rack by
selecting an invertible $t\in R$ and an element $s\in R$ satisfying 
$s^2=(1-t)s$. If $R$ is finite, e.g. $R=\mathbb{Z}_n$, then $X$ is a
finite rack. Racks of this type with $R=\mathbb{Z}_n$ will be called
\textit{linear $(t,s)$-racks}, since we have $R=\ddot\Lambda_n/(t-a,s-b)$
for some $t=a,s=b \in \mathbb{Z}_n$. If $R$ is a field, then either $s=0$
and we have a constant action rack with $\sigma(x)=tx$, or $s$ is 
invertible; if $s$ is invertible, then $s^2=(1-t)s$ implies $s=1-t$ and 
our rack is a quandle. Thus, we have:

\begin{proposition}
Every linear $(t,s)$-rack $X=\mathbb{Z}_p$ for $p$ prime is
either a constant action rack or a linear Alexander quandle.
\end{proposition}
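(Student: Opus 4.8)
The plan is to use the fact that for $p$ prime the coefficient ring $\mathbb{Z}_p$ is a field, and in particular an integral domain. By definition of a \emph{linear} $(t,s)$-rack, the operation $x\tr y = tx+sy$ on $X=\mathbb{Z}_p$ is determined by a choice of invertible $t\in\mathbb{Z}_p$ together with an element $s\in\mathbb{Z}_p$ satisfying $s^2=(1-t)s$. The key move is to rewrite this defining relation as $s\bigl(s-(1-t)\bigr)=0$ and then invoke the absence of zero divisors.

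From $s\bigl(s-(1-t)\bigr)=0$ in the field $\mathbb{Z}_p$ we conclude that either $s=0$ or $s=1-t$. If $s=0$, then $x\tr y=tx$ depends only on $x$; since $t$ is a unit, $\sigma(x)=tx$ is a bijection of $\mathbb{Z}_p$, so $X$ is precisely the constant action rack determined by $\sigma$. If $s=1-t$, then $x\tr y=tx+(1-t)y$, which is by definition the Alexander quandle structure on the $\mathbb{Z}_p$-module $\mathbb{Z}_p$, i.e. a linear Alexander quandle. These two cases exhaust the possibilities for $s$, which yields the claim.

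There is essentially no obstacle here; the only point worth a word of care is that the defining data of a linear $(t,s)$-rack already includes invertibility of $t$ (coming from the presentation $\mathbb{Z}_p\cong\ddot\Lambda_p/(t-a,s-b)$ with $t\mapsto a\in\mathbb{Z}_p$ a unit), which is exactly what guarantees that the $s=0$ branch genuinely produces a constant action rack rather than a degenerate structure. No further argument is needed to reconcile the two branches: when $t=1$ they coincide, and otherwise they are mutually exclusive, but in every case the ``or'' in the statement is satisfied.
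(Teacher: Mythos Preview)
Your proof is correct and matches the paper's own argument essentially verbatim: the paper observes that in a field either $s=0$ (giving the constant action rack $\sigma(x)=tx$) or $s$ is invertible, in which case $s^2=(1-t)s$ forces $s=1-t$ and the rack is an Alexander quandle. Your factoring $s(s-(1-t))=0$ and appeal to the integral domain property is the same idea phrased slightly differently.
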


\begin{example}\label{num4}
\textup{The smallest nonquandle example of a linear $(t,s)$-rack is 
$X=\mathbb{Z}_4=\{1,2,3,4\}$ with $t=1$ and $s=2$. Then we have 
$s^2=2^2=4=0$ and 
$(1-t)s=(1-1)2=0$. Since the kink map here is $\pi(x)=(s+t)x=3x$ we have 
rack rank $N=2$ and $X$ is a non-quandle rack. The rack matrix of this rack
is}
\[M_{X}=\left[\begin{array}{cccc}
3 & 1 & 3 & 1 \\
4 & 2 & 4 & 2 \\
1 & 3 & 1 & 3 \\
2 & 4 & 2 & 4 \\
\end{array}\right].\]
\end{example}

Another way to get finite $(t,s)$-racks is to take quotients of
$\ddot\Lambda$. The relation $s^2=(1-t)s$ says we can replace any power of
$s$ greater than 1 with an equivalent expression which is linear in $s$; thus
as an abelian group, we have
$\ddot\Lambda\cong \mathbb{Z}[t^{\pm 1}]\oplus \mathbb{Z}[t^{\pm 1}]$.
Then we can get finite $(t,s)$-racks by taking $\ddot\Lambda_n/(p(t))$
for a monic polynomial $p(t)$. 

\begin{example}\label{lin4}
\textup{Let $Y=\ddot\Lambda_2/(t+1)$. The elements of $Y$ 
include $0,1,s$ and $1+s$, and $X$ has operation table}
\[\begin{array}{c|cccc}
\tr & 0   & 1   & s   & 1+s \\ \hline
0   & 0   & s   & 0   & s   \\  
1   & 1   & 1+s & 1   & 1+s \\ 
s   & s   & 0   & s   & 0   \\
1+s & 1+s & 1   & 1+s & 1   \\
\end{array}\]
\textup{Thus, we appear to have a second example of a non-quandle 
$(t,s)$-rack of four elements. However, it is easy to check that
this rack is isomorphic to the 4-element linear $(t,s)$-rack in 
example \ref{num4}, via for example $\phi:Y\to X$ given by 
$\phi(0)=4$, $\phi(1)=1$, $\phi(s)=2$ and $\phi(1+s)=3$. $X$ and $Y$ are
not isomorphic as $\ddot\Lambda$-modules, however, since their additive
structures are different -- as abelian groups, $X=\mathbb{Z}_4$ while 
$Y=\mathbb{Z}_2\oplus\mathbb{Z}_2$.}
\end{example}

We can define a $(t,s)$-rack structure on any abelian group $A$ by selecting 
an automorphism $t:A\to A$ and an endomorphism $s:A\to A$ satisfying the 
conditions that $st=ts$ and that $s^2=(\mathrm{Id}-t)s$. 

\begin{example}\textup{The linear $(t,s)$-rack in example \ref{lin4} can
be expressed as $X=\mathbb{Z}_2\oplus\mathbb{Z}_2$ with $t=\mathrm{Id}$
and $s(x,y)=(0,x)$, while the linear $(t,s)$-rack in example \ref{num4}
has $X=\mathbb{Z}_4$ with $t(x)=x$ and $s(x)=2x$.}
\end{example}

If $\phi:X\to Y$ is an isomorphism of $\ddot\Lambda$-modules, then
$\phi$ is also an isomorphism of $(t,s)$-racks; however, it is clear from 
examples \ref{num4} and \ref{lin4} that rack isomorphism
type does not determine $\ddot\Lambda$-module structure. What conditions
on $\ddot\Lambda$-modules result in isomorphic $(t,s)$-racks?
In \cite{AG,N1} we have a theorem about Alexander quandles, namely:

\begin{theorem}\label{alexq}
Two finite Alexander quandles $M$ and $M'$ are isomorphic as quandles
iff 
\begin{list}{}{}
\item[(i)]{$|M|=|M'|$ and} 
\item[(ii)]{There exists a $\mathbb{Z}[t^{\pm 1}]$-module isomorphism
$h:(1-t)M\to(1-t)M'$.}
\end{list}
\end{theorem}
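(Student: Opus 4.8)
The plan is to prove the two implications separately. The forward direction is the easier one: if $f\colon M\to M'$ is a quandle isomorphism then $|M|=|M'|$ is immediate, and to get (ii) I will extract a module isomorphism from $f$ even though $f$ itself need not be additive. The key is to work with the normalized map $\hat f(x):=f(x)-f(0)$. Using the identities $x\tr 0=tx$, $0\tr y=(1-t)y$, and more generally $a+(1-t)c=(t^{-1}a)\tr c$, valid in any Alexander quandle, the homomorphism property $f(u\tr v)=f(u)\tr f(v)$ translates successively into $\hat f(tx)=t\hat f(x)$, then $\hat f((1-t)c)=(1-t)\hat f(c)$, and finally $\hat f(a+b)=\hat f(a)+\hat f(b)$ whenever $b\in(1-t)M$. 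In particular the restriction $h:=\hat f|_{(1-t)M}$ is additive and commutes with $t$, hence is a $\mathbb{Z}[t^{\pm 1}]$-module homomorphism into $(1-t)M'$; it is injective because it differs from the injection $f$ by a translation, and its image is $(1-t)\hat f(M)=(1-t)M'$, so $h$ is the required isomorphism.

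For the converse I plan to construct a quandle isomorphism $M\to M'$ directly from the orbit structure, since a module isomorphism of the submodules $(1-t)M$ and $(1-t)M'$ need not come from any group isomorphism of $M$ and $M'$ (whose underlying abelian groups may not even be isomorphic). Because $x\tr y=x+(1-t)(y-x)$, the orbits of the quandle $M$ are exactly the cosets of $(1-t)M$, so their number is $n:=|M|/|(1-t)M|$, which coincides with the corresponding number for $M'$. Fixing a representative $r_O\in O$ for each orbit $O$ and identifying $O$ with $(1-t)M$ by $v\mapsto r_O+v$ puts the operation of $M$ into the normal form $(O,v)\tr(P,w)=\big(O,\ tv+(1-t)w+(1-t)(r_P-r_O)\big)$, and likewise for $M'$. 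In this normal form, for any bijection $\iota$ between the orbit sets of $M$ and $M'$ and any choice of representatives $r'$ for $M'$, the map $f(r_O+v):=r'_{\iota(O)}+h(v)$ is a bijection, and a short computation using that $h$ is additive and $t$-equivariant shows it is a quandle homomorphism precisely when $(1-t)\,r'_{\iota(O)}=h\big((1-t)r_O\big)$ holds for every orbit $O$.

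The heart of the proof, and the main obstacle, is therefore the matching problem: one must choose the bijection $\iota$ and the representatives $r'_{\iota(O)}$ — one per orbit of $M'$ — so that $(1-t)r'_{\iota(O)}=h((1-t)r_O)$. I will resolve this with a counting argument. Consider the group homomorphism $\Psi\colon M/(1-t)M\to(1-t)M/(1-t)^2M$ given by $x+(1-t)M\mapsto(1-t)x+(1-t)^2M$; it is surjective, so all its fibres have the same cardinality $d=n\,|(1-t)^2M|/|(1-t)M|$, and the same holds for the analogous $\Psi'$ on the $M'$ side. Since $h$ commutes with multiplication by $1-t$, it restricts to isomorphisms $(1-t)M\cong(1-t)M'$ and $(1-t)^2M\cong(1-t)^2M'$ and induces a bijection $\bar h$ of the codomains of $\Psi$ and $\Psi'$; comparing the formulas for $d$ then gives that the fibre sizes agree. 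Hence $|\Psi^{-1}(\bar y)|=|\Psi'^{-1}(\bar h(\bar y))|$ for every $\bar y$, so there is a bijection $\iota$ of orbit sets with $\Psi'\circ\iota=\bar h\circ\Psi$. This last identity says exactly that $h((1-t)r_O)$ lies in the coset $(1-t)\,\iota(O)$ of $(1-t)^2M'$, i.e. that a representative $r'_{\iota(O)}$ with $(1-t)r'_{\iota(O)}=h((1-t)r_O)$ can be found inside $\iota(O)$. With these choices the map $f$ above is the desired quandle isomorphism, completing the argument.
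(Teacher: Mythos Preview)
Your proof is correct. Note, however, that the paper does not itself prove this theorem: it is quoted as a known result from \cite{AG,N1}, and the paper instead proves the generalization Theorem~\ref{main} for $(t,s)$-racks. Your forward direction is essentially the specialization to $s=1-t$ of the paper's ($\Rightarrow$) argument for Theorem~\ref{main}: both normalize so that $0\mapsto 0$ and then verify that the restriction to the $s$-submodule (here $(1-t)M$) is a $\ddot\Lambda$-module map. For the converse, the paper's Theorem~\ref{main} builds the rack isomorphism as $\phi(\alpha+\omega)=g(\alpha)+h(\omega)$ from a postulated bijection $g$ of coset representatives satisfying $h(s\alpha)=sg(\alpha)$; the Remark following Theorem~\ref{main} observes that in the Alexander case $\mathcal{O}_{(t+s)}(A)=A$ and defers to \cite{N1} for the fact that such a $g$ always exists. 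Your counting argument with $\Psi\colon M/(1-t)M\to(1-t)M/(1-t)^2M$ is precisely that deferred step: it supplies, self-contained, the matching $\iota$ and the adjusted representatives $r'_{\iota(O)}$ so that $(1-t)r'_{\iota(O)}=h((1-t)r_O)$, after which your map $f(r_O+v)=r'_{\iota(O)}+h(v)$ coincides with the paper's $\phi$. So your argument is the same construction as the paper's, but it fills in the one piece the paper outsources to the literature.
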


More colloquially, theorem 1 says that two Alexander quandles of the 
same finite cardinality are isomorphic iff their \textit{$(1-t)$-submodules}
$(1-t)M$ and $(1-t)M'$ are isomorphic as $\mathbb{Z}[t^{\pm 1}]$-modules.
We would like to generalize this result to $(t,s)$-racks.
We first note that the straightforward generalization obtained by simply 
replacing $1-t$ with $s$ does not work; $X=\ddot\Lambda_4/(t-1,s-2)$ and 
$Y=\ddot\Lambda_4/(t-3,s-2)$ both have $s$-submodules $sX$ and $sY$ 
isomorphic to $\ddot\Lambda_2/(t-1,s-0)$ and $|X|=|Y|$, but $Y$ is a 
quandle while $X$ is a rack with rack rank 2.

As in the case of Alexander biquandles in \cite{LN}, we are able 
to give necessary and sufficient conditions for two $(t,s)$-racks to be 
isomorphic. We first need a few lemmas:

\begin{lemma}
If $\phi:X\to Y$ is a homomorphism of $(t,s)$-racks, then
$\phi((t+s)x)=(t+s)\phi(x)$ for all $x\in X$.
\end{lemma}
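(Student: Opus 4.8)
The plan is to reduce this to the already-established interaction between a rack homomorphism and the kink maps. The key observation is that for a $(t,s)$-rack the kink map is literally multiplication by $t+s$: the earlier lemma gives $\pi_X(x) = (t+s)x$ on $X$ and $\pi_Y(y) = (t+s)y$ on $Y$. So the claimed identity $\phi((t+s)x) = (t+s)\phi(x)$ is just the statement that $\phi$ intertwines the two kink maps, i.e. $\phi \circ \pi_X = \pi_Y \circ \phi$.

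First I would recall that $\phi$ is in particular a rack homomorphism, so $\phi(a \tr b) = \phi(a) \tr \phi(b)$ for all $a,b \in X$. Specializing $b = a = x$ and using the definition $\pi(x) = x \tr x$ of the kink map, this yields $\phi(\pi_X(x)) = \phi(x \tr x) = \phi(x) \tr \phi(x) = \pi_Y(\phi(x))$ — this is exactly the computation already carried out in the proof of Lemma \ref{lem:qord}. Then I would substitute $\pi_X(x) = (t+s)x$ on the left and $\pi_Y(\phi(x)) = (t+s)\phi(x)$ on the right, using the lemma identifying the kink map of a $(t,s)$-rack with multiplication by $t+s$, to conclude $\phi((t+s)x) = (t+s)\phi(x)$.

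There is essentially no obstacle here; the only thing to be slightly careful about is that the statement is being asserted for a rack homomorphism, not an $\ddot\Lambda$-module homomorphism, so one must not invoke any linearity of $\phi$ — the whole point is that even a mere rack map automatically respects multiplication by the particular scalar $t+s$ (though not by $t$ or $s$ individually). The argument above uses only the rack-homomorphism property plus the two previously proved lemmas, so it is self-contained. This lemma is presumably the first step toward the classification of $(t,s)$-racks up to isomorphism, where one will want to restrict an arbitrary rack isomorphism to the submodule $(t+s)X$ and deduce it is a module map there.
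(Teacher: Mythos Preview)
Your proof is correct and is essentially the same as the paper's: both compute $\phi((t+s)x)=\phi(x\tr x)=\phi(x)\tr\phi(x)=(t+s)\phi(x)$. You have simply phrased it as ``$\phi$ intertwines the kink maps'' and invoked the earlier lemmas by name, whereas the paper writes the one-line computation directly.
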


\begin{proof}
\[\phi((t+s)x)=\phi(x\tr x)=\phi(x)\tr\phi(x)=(t+s)\phi(x).\]
\end{proof}

\begin{lemma} \label{qls}
Let $X$ be a $(t,s)$-rack and let $z\in X$. The bijective map $p_z:X\to X$ 
defined by $p_z(x)=x+z$ is a rack isomorphism if and only if $\pi(z)=z$.
\end{lemma}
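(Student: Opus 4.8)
The plan is to verify directly that $p_z$ is a rack homomorphism precisely when $\pi(z) = z$, and then observe that a bijective homomorphism whose inverse is of the same form is automatically an isomorphism. Since $p_z$ is clearly a bijection for any $z$ (with inverse $x \mapsto x - z$), the entire content is the homomorphism condition, which breaks into the two rack operations $\tr$ and $\tr^{-1}$.

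First I would compute, using $x \tr y = tx + sy$ and linearity of everything in sight,
\[
p_z(x) \tr p_z(y) = t(x+z) + s(y+z) = (tx + sy) + (t+s)z = p_z(x \tr y) - z + (t+s)z,
\]
so that $p_z(x) \tr p_z(y) = p_z(x \tr y) + \big((t+s)z - z\big)$. Since $\pi(z) = (t+s)z$ by the lemma computing the kink map of a $(t,s)$-rack, this says $p_z(x \tr y) = p_z(x) \tr p_z(y)$ for all $x, y$ if and only if $(t+s)z - z = \pi(z) - z$ lies in the annihilator of... actually more carefully: the displayed identity shows $p_z$ respects $\tr$ for all $x,y$ exactly when $(t+s)z = z$, i.e.\ $\pi(z) = z$ (taking $x=y=0$ gives necessity, and the identity gives sufficiency). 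An entirely parallel computation with $x \tr^{-1} y = t^{-1}(x - sy)$ gives
\[
p_z(x) \tr^{-1} p_z(y) = t^{-1}\big((x+z) - s(y+z)\big) = t^{-1}(x - sy) + t^{-1}(z - sz) = p_z(x \tr^{-1} y) + t^{-1}\big(z - sz\big),
\]
and since $t^{-1}(z - sz) = t^{-1}(1-s)z$, while $\pi(z)=z$ reads $(t+s)z = z$ i.e.\ $sz = (1-t)z$, substituting gives $t^{-1}(z - sz) = t^{-1}(z - (1-t)z) = t^{-1}(tz) = 0$; so $\pi(z)=z$ also forces $p_z$ to respect $\tr^{-1}$. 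Conversely if $p_z$ respects $\tr^{-1}$ then taking $x=y=0$ yields $t^{-1}(1-s)z = 0$, hence $(1-s)z = 0$, hence $sz = z$, and combined with... hmm, one must check this is equivalent to $\pi(z)=z$; in fact $sz = z$ together with $s^2 z = (1-t)sz$ gives $z = (1-t)z$, so $tz = 0$, which forces $z = 0$ (as $t$ is invertible), a degenerate case still satisfying $\pi(z)=z$. So it is cleaner to derive necessity from the $\tr$-condition alone and sufficiency for both operations from $\pi(z)=z$.

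The only subtlety — and the step I expect to watch most carefully — is the logical structure: establishing that $\pi(z) = z$ is \emph{necessary} (which follows by plugging $x = y = 0$ into the $\tr$-identity, giving $z \tr z = z$, i.e.\ $\pi(z) = z$) and \emph{sufficient} for $p_z$ to respect \emph{both} $\tr$ and $\tr^{-1}$ simultaneously, so that $p_z$ is a genuine rack homomorphism; bijectivity is free, and the inverse $p_{-z}$ satisfies $\pi(-z) = -\pi(z) = -z$, so it too is a homomorphism, making $p_z$ an isomorphism. I would write the sufficiency direction by plugging $\pi(z) = (t+s)z = z$ (equivalently $sz = (1-t)z$) into the two displayed identities above and checking the correction terms vanish, and the necessity direction from the $x=y=0$ specialization. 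No deep obstacle arises; the computations are short and rely only on the module axioms, the relation $s^2 = (1-t)s$, and the formula $\pi(x) = (t+s)x$ already established.
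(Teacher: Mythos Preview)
Your approach is the same as the paper's: compute $p_z(x\tr y)$ and $p_z(x)\tr p_z(y)$ directly and compare. The paper stops there, since for a bijection $p_z$, compatibility with $\tr$ automatically forces compatibility with $\tr^{-1}$ (apply the $\tr$-identity to $x\tr^{-1}y$ in place of $x$). Your additional $\tr^{-1}$ check is therefore unnecessary, and in fact contains slips: you wrote $p_z(x\tr^{-1}y)$ where you meant $x\tr^{-1}y$, so the correction term you need to vanish is $t^{-1}(z-sz)-z$, not $t^{-1}(z-sz)$; and $t^{-1}(tz)=z$, not $0$. These errors happen to cancel, so your conclusion survives, but you should either fix the arithmetic or simply drop the $\tr^{-1}$ verification entirely.
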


\begin{proof} Let $X$ be a $(t,x)$-rack. Then for any $x,y,z\in X$ we have
\[p_z(x\tr y)=p_z(tx+sy)=tx+sy+z\] while
\[p_z(x)\tr p_z(y)=(x+z)\tr (y+z)=tx+tz+sy+sz=tx+sy+(t+s)z.\]
Then $p_z(x\tr y)=p_z(x)\tr p_z(y)$ iff $z=(t+s)z=\pi(z)$. 
\end{proof}

Let $X$ be a $(t,s)$-rack and $A\subset X$ a subset. The 
\textit{$(t+s)$-orbit} of $A$, denoted $\mathcal{O}_{(t+s)}(A)$, is
the set 
\[\mathcal{O}_{(t+s)}(A)=\{(t+s)^k\alpha \ |\ \alpha\in A, 
k\in\mathbb{Z}\}.\]
We will be interested in the case where $A$ is a set of coset representatives
of $X/sX$; note that in such a case multiple elements of 
$\mathcal{O}_{(t+s)}(A)$ may belong to the same coset of $X/sX$.
Moreover, note that since $(t+s)$ is invertible, every element 
$x\in X$ can be written as $x=(t+s)y$ for some $y=\alpha+\omega$ with 
$\alpha\in A$, $\omega\in sX$; then we have 
\[x=(t+s)\alpha+(t+s)\omega=(t+s)\alpha+\omega'\]
where $\alpha\in A$, $\omega'\in sX$. In particular, every element of 
$\mathcal{O}_{(t+s)}(A)$ can be written as $(t+s)\alpha+\omega$ for some 
$\alpha\in A, \omega\in sX$.

\begin{theorem}\label{main}
Two $(t,s)$-racks $X,Y$ are isomorphic if and only if 
\begin{list}{}{}
\item[(i)]{There is an isomorphism of $\ddot\Lambda$-submodules 
$h:sX\to sY$ and}
\item[(ii)]{There are sets of coset representatives $A, B$ for 
$X/sX$ and $Y/sY$ and a bijection 
\[g:\mathcal{O}_{(t+s)}(A)\to \mathcal{O}_{(t+s)}(B)\] such that  
\[ h(s\alpha)=sg(\alpha)\] for all $\alpha\in A$ and 
\[g((t+s)\alpha+\omega)=(t+s)g(\alpha)+h(\omega)\]
for all $(t+s)\alpha+\omega\in\mathcal{O}_{(t+s)}(A)$  with
$\alpha\in A$ and $\omega\in sX$.}
\end{list}
\end{theorem}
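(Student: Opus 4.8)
\medskip\noindent\textbf{Proof plan.}\quad The argument will rest on two structural features of $(t,s)$-racks. First, the relation $s^2=(1-t)s$ gives $(t+s)s=s$ in $\ddot\Lambda$, so $sX$ is a $\ddot\Lambda$-submodule on which the kink map $\pi=(t+s)\cdot$ acts as the identity, while $\pi$ permutes the cosets of $X/sX$, acting there as multiplication by the unit $t$; in particular every element of $\mathcal{O}_{(t+s)}(A)$ will have a \emph{unique} expression $(t+s)\alpha+\omega$ with $\alpha\in A$, $\omega\in sX$. Second, by Lemma~\ref{qls}, translation by any $\pi$-fixed element is a rack automorphism.

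For the forward direction I would start from a rack isomorphism $\phi\colon X\to Y$ and normalize it: since $\phi(0)=\phi(0\tr 0)=(t+s)\phi(0)$, the element $\phi(0)$ is $\pi_Y$-fixed, so Lemma~\ref{qls} lets me compose $\phi$ with a translation of $Y$ to arrange $\phi(0)=0$. Using $tx=x\tr 0$ and $sy=0\tr y$, the homomorphism property then forces $\phi(tx)=t\phi(x)$ and $\phi(sy)=s\phi(y)$, hence $\phi(tx+sy)=\phi(tx)+\phi(sy)$; as $tx$ ranges over all of $X$ and $sy$ over $sX$, this means $\phi$ is additive on every sum $a+\omega$ with $a\in X$, $\omega\in sX$. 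From this I would read off that $h:=\phi|_{sX}$ is a $\ddot\Lambda$-module isomorphism $sX\to sY$ (which is (i)) and that $\phi$ carries $sX$-cosets bijectively to $sY$-cosets. Fixing any coset representatives $A$ for $X/sX$, setting $B=\phi(A)$ and letting $g=\phi|_{\mathcal{O}_{(t+s)}(A)}$, which is a bijection $\mathcal{O}_{(t+s)}(A)\to\mathcal{O}_{(t+s)}(B)$, the two identities in (ii) then follow from additivity together with $\phi((t+s)z)=(t+s)\phi(z)$.

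For the converse, given $h,g,A,B$ satisfying (i) and (ii), I would define $\phi\colon X\to Y$ through the unique decomposition $x=\alpha+\omega$ (with $\alpha\in A$ the coset representative of $x+sX$) by $\phi(\alpha+\omega)=g(\alpha)+h(\omega)$. The key preliminary lemma I would prove is that (ii) upgrades to $g(v+\mu)=g(v)+h(\mu)$ for all $v,v+\mu\in\mathcal{O}_{(t+s)}(A)$ and all $\mu\in sX$, which is immediate from the uniqueness of the representation $(t+s)\alpha+\omega$. Checking $\phi(x\tr y)=\phi(x)\tr\phi(y)$ is then a direct computation: writing $x=\alpha+\omega$, $y=\beta+\eta$ and using $t\alpha=(t+s)\alpha-s\alpha$, one rewrites $x\tr y$ in decomposed form as $\alpha^{*}+(\mu+\nu)$, where $\alpha^{*}$ represents the coset of $(t+s)\alpha$, and then the key lemma, the $\omega=0$ case $g((t+s)\alpha)=(t+s)g(\alpha)$ of (ii), and the relation $h(s\alpha)=sg(\alpha)$ collapse $\phi(x\tr y)$ to $tg(\alpha)+th(\omega)+sg(\beta)+sh(\eta)$, which is exactly $\phi(x)\tr\phi(y)$.

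The hard part will be showing that $\phi$ is a bijection; this is where it is essential that $g$ is a \emph{bijection} of $(t+s)$-orbits rather than merely a compatible map. The plan is to deduce that $g$ descends to a bijection $\hat g\colon X/sX\to Y/sY$ on cosets: by (ii), for $v=(t+s)\alpha+\omega\in\mathcal{O}_{(t+s)}(A)$ the coset of $v$ is $t\bar\alpha$ while the coset of $g(v)$ is $t\overline{g(\alpha)}$, so $g$ maps the portion of $\mathcal{O}_{(t+s)}(A)$ lying in any single coset of $X/sX$ into a single coset of $Y/sY$; since $A\subseteq\mathcal{O}_{(t+s)}(A)$ meets every coset and $g$ is a bijection, the induced map on cosets is a bijection, with $\hat g(\bar\alpha)=\overline{g(\alpha)}$ for $\alpha\in A$. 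With $\hat g$ bijective and $h$ a module isomorphism, the injectivity and surjectivity of $\phi$ reduce to routine bookkeeping in the decomposition $y=\beta+\eta$. I expect this bijectivity step, together with the constant need to keep ``elements of $A$'' distinct from ``elements of $\mathcal{O}_{(t+s)}(A)$ lying in a prescribed coset,'' to be the fussiest part of the write-up; everything else is a mechanical consequence of the two structural facts above.
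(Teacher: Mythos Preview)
Your proposal is correct and follows essentially the same route as the paper: normalize so that $\phi(0)=0$ via Lemma~\ref{qls}, use $\phi(tx)=t\phi(x)$ and $\phi(sy)=s\phi(y)$ to see that $h=\phi|_{sX}$ is a $\ddot\Lambda$-isomorphism, take $g=\phi|_{\mathcal{O}_{(t+s)}(A)}$ and $B=\phi(A)$ for the forward direction, and for the converse set $\phi(\alpha+\omega)=g(\alpha)+h(\omega)$ and check the homomorphism identity by rewriting $t\alpha=(t+s)\alpha-s\alpha$ exactly as you describe. The only notable difference is in what you flag as the ``hard part'': the paper dispatches bijectivity in one line by observing that the symmetric recipe $\phi^{-1}(\beta+\gamma)=g^{-1}(\beta)+h^{-1}(\gamma)$ furnishes an inverse, so your planned detour through an induced bijection $\hat g$ on cosets, while correct, is more work than needed.
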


\begin{proof} 

($\Rightarrow$) Let $\phi:X\to Y$ be an isomorphism of $(t,s)$-racks.
In $X$ we have $0\tr 0=t0+s0=0+0=0$ so $N(0)=1$, and lemma 
\ref{lem:qord} implies $N(\phi(0))=1$. Then by lemma \ref{qls}
we may assume without loss of generality that $\phi(0)=0$ since if not,
we can replace $\phi$ with $p_{-\phi(0)}\circ\phi$.

Since $\phi$ is a $(t,s)$-rack homomorphism we have
\[\phi(tx+sy)=\phi(x\tr y)=\phi(x)\tr\phi(y)=t\phi(x)+s\phi(y)\]
and since $\phi(0)=0$ we have
\[\phi(tx)=\phi(tx+s0)=t\phi(x)+s\phi(0)=tx+s0=tx\]
and
\[\phi(sy)=\phi(t0+sy)=t\phi(0)+s\phi(y)=t0+sy=sy.\]
Since $t$ is invertible, every element $x\in X$ is $t(t^{-1}x)$. 
Then we have
\begin{eqnarray*}
\phi(sx+sy) & = & \phi(t(t^{-1}sx)+sy) \\
 & = & t\phi(t^{-1}sx)+s\phi(y) \\
 & = & \phi(tt^{-1}sx)+\phi(sy) \\
 & = & \phi(sx)+\phi(sy). \\
\end{eqnarray*}
Not 
every $x\in X$ need satisfy $x=sz$ for some $z\in X$, but for those that do, 
i.e. for the submodule $sX$, we have $\phi$ preserving multiplication 
by both $t$ and $s$ and preserving addition, so the restriction 
$h=\phi|_{sX}$ is an isomorphism of $\ddot\Lambda$-modules. 

Now, let $A$ 
be any set of coset representatives of
$X/sX$. Define $g=\phi|_{\mathcal{O}_{(t+s)}(A)}$ and 
set $B=\{\phi(\alpha)\ |\ \alpha\in A\}$. 
Then for each $\alpha\in A$ we have 
\[h(s\alpha)=\phi(s\alpha)=s\phi(\alpha)=sg(\alpha)\]
and for any $(t+s)\alpha+\omega\in \mathcal{O}_{(t+s)}(A)$ with $\omega=s\gamma$
we have
\begin{eqnarray*}
g((t+s)\alpha +\omega)
& = &\phi((t+s)\alpha+\omega)=\phi(tt^{-1}(t+s)\alpha+s\gamma) \\
& = & t\phi(t^{-1}(t+s)\alpha)+s\phi(\gamma) \\
& = & \phi(tt^{-1}(t+s)\alpha)+\phi(s\gamma) \\
& = & (t+s)\phi(\alpha) + \phi(\omega) \\
& = & (t+s)g(\alpha) +h(\omega)
\end{eqnarray*}
as required.

Finally, note that $B$ is a set of coset representatives for $Y/sY$ since 
if $\beta-\beta'\in sY$ for any $\beta,\beta'\in B$ then 
$\beta=\beta'+s\gamma$ and we have
\[\phi^{-1}(\beta)
=\phi^{-1}(tt^{-1}\beta'+s\gamma)
=t\phi^{-1}(t^{-1}\beta')+s\phi^{-1}(\gamma)
=\phi^{-1}(tt^{-1}\beta)+\phi^{-1}(s\gamma)
=\phi^{-1}(\beta')+\phi^{-1}(s\gamma)
\]
and the corresponding $\alpha=\phi^{-1}\beta$, $\alpha'=\phi^{-1}(\beta')$
satisfy $\alpha-\alpha'\in sX$.

($\Leftarrow$) Let $X$ and $Y$ be $(t,s)$-racks, $h:sX\to sY$ an isomorphism
of $\ddot\Lambda$-modules, and suppose 
$A\subset X$ and $B\subset Y$ are sets of coset
representatives of $X/sX$ and $Y/sY$ respectively, with a bijection
$g:\mathcal{O}_{(t+s)}(A)\to \mathcal{O}_{(t+s)}(B)$ satisfying
\[ h(s\alpha)=sg(\alpha)\] for all $\alpha\in A$ and
\[g((t+s)\alpha+\omega)=(t+s)g(\alpha)+h(\omega)\]
for all $(t+s)\alpha+\omega\in\mathcal{O}_{(t+s)}(A)$. In particular, $\omega=0$
says $g((t+s)^k\alpha)=(t+s)^kg(\alpha)$.
Define $\phi:X\to Y$ by 
\[\phi(\alpha+\omega)=g(\alpha)+h(\omega)\]
where $\alpha\in A$ and $\omega\in sX$. 

To see that $\phi$ is well-defined, suppose $\alpha+\omega=\alpha'+\omega'$
where $\alpha,\alpha'\in\mathcal{O}_{(t+s)}(A)$ and $\omega,\omega'\in sX$.
Then $\alpha'=\alpha+(\omega-\omega')$ and we have
\begin{eqnarray*}
\phi(\alpha'+\omega') 
& = & g(\alpha')+h(\omega')\\
& = & g(\alpha +(\omega-\omega'))+h(\omega') \\
& = & g((t+s)(t+s)^{-1}\alpha +(\omega-\omega')) +h(\omega')\\
& = & (t+s)g((t+s)^{-1}\alpha) +h(\omega-\omega')+h(\omega') \\
& = & (t+s)g((t+s)^{-1}\alpha) +h(\omega)-h(\omega')+h(\omega') \\
& = & (t+s)g((t+s)^{-1}\alpha) +h(\omega) \\
& = & g(\alpha) +h(\omega) \\
& = & \phi(\alpha+\omega)
\end{eqnarray*}
To see that $\phi$ is bijective, note that we can define $\phi^{-1}:Y\to X$
by $\phi^{-1}(\beta+\gamma)=g^{-1}(\beta)+h^{-1}(\gamma)$.

To see that $\phi$ is a homomorphism of $(t,s)$-racks, let $x=\alpha+\omega$ 
and $y=\alpha'+\omega'$ with $\alpha,\alpha'\in A$ and $\omega,\omega'\in sX$, 
and note that $t\alpha=(t+s)\alpha-s\alpha$. Then

\begin{eqnarray*}
\phi(x\tr y) & = & \phi(t(\alpha+\omega) +s(\alpha'+\omega')) \\
& = & \phi((t+s)\alpha -s\alpha +t\omega +s\alpha'+s\omega') \\
& = & g((t+s)\alpha) + h(-s\alpha +t\omega+s\alpha'+s\omega') \\
& = & (t+s)g(\alpha) -h(s\alpha)+ th(\omega)+h(s\alpha')+sh(\omega') \\
& = & tg(\alpha)+sg(\alpha)-h(s\alpha)+th(\omega)+h(s\alpha')+sh(\omega') \\
& = & tg(\alpha)+h(s\alpha)-h(s\alpha)+th(\omega)+sg(\alpha')+sh(\omega') \\
& = & tg(\alpha)+th(\omega)+sg(\alpha')+sh(\omega') \\
& = & t(g(\alpha)+ h(\omega))+s(g(\alpha)+h(\omega')) \\
& = & \phi(x)\tr \phi(y) \\
\end{eqnarray*}
as required. 

\end{proof}

\begin{remark}
\textup{Note that if $s=1-t$ and $X$ is an Alexander quandle, then 
$s+t=1$ and $\mathcal{O}_{s+t}(A)$ is just $A$. Then the fact that $A$ is
a set of coset representatives means that condition $(ii)$ reduces to 
the requirement that $h(s\alpha)=sg(\alpha)$, and it is shown in \cite{N}
that this condition can always be satisfied.}
\end{remark}

We end this section with a few interesting observations about $(t,s)$-racks.

\medskip

In every rack, the $\tr$ and $\tr^{-1}$ operations are 
right-distributive\footnote{At least, when we write the quandle operation
as a right action following Joyce. In some works such as \cite{AG} the
rack operations are written as left actions, in which case the rack axioms
require left-distributivity.}; if a quandle is Alexander, however,
the quandle operations are also left-distributive. This property does
not extend to more general $(t,s)$-racks:

\begin{proposition}
A $(t,s)$-rack $X$ is left-distributive if and only if $X$ is an Alexander
quandle.
\end{proposition}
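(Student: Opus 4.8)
The plan is to expand the left-distributive law $x\tr(y\tr z)=(x\tr y)\tr(x\tr z)$ using the defining operation $a\tr b=ta+sb$ together with the module relation $s^2=(1-t)s$, and read off the constraint this imposes. First I would compute both sides. On the left, $x\tr(y\tr z)=tx+s(ty+sz)=tx+sty+s^2z=tx+sty+(1-t)sz$, using that $t$ and $s$ commute and $s^2=(1-t)s$. On the right, with $a=x\tr y=tx+sy$ and $b=x\tr z=tx+sz$, we get $a\tr b=t(tx+sy)+s(tx+sz)=t^2x+tsx+tsy+(1-t)sz$. Subtracting, the law holds for all $x,y,z\in X$ if and only if $(t^2+ts-t)x=0$ for all $x\in X$, i.e. $t(t+s-1)x=0$ for all $x$. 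Since $t$ acts invertibly on $X$, this is equivalent to $(s-(1-t))x=0$ for all $x\in X$; that is, $s$ and $1-t$ induce the same endomorphism of $X$.

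For the ($\Leftarrow$) direction: if $X$ is an Alexander quandle then $s=1-t$ (equivalently $s$ acts as $1-t$ on $X$), so the displayed identity holds and $X$ is left-distributive — this is just the computation above run in reverse. For ($\Rightarrow$): if $X$ is left-distributive, the computation gives $sx=(1-t)x$ for all $x\in X$, so the rack operation is $x\tr y=tx+(1-t)y$, and the kink map satisfies $\pi(x)=(t+s)x=x$, whence $N(X)=1$ and $X$ is a quandle — precisely an Alexander quandle.

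I do not expect a real obstacle here; the work is a direct computation. The only points requiring care are the bookkeeping — applying $s^2=(1-t)s$ (rather than, say, $s^2=0$, which holds only in special cases such as example \ref{num4}) and invoking invertibility of $t$ when cancelling — and being explicit that ``$X$ is an Alexander quandle'' is to be read as ``the action of $s$ on $X$ agrees with that of $1-t$'', since as elements of $\ddot\Lambda$ the quantities $s$ and $1-t$ are distinct even when they coincide on a particular module.
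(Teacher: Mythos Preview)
Your proof is correct and follows essentially the same route as the paper's: both expand $x\tr(y\tr z)$ and $(x\tr y)\tr(x\tr z)$, cancel the common terms, and use invertibility of $t$ to reduce left-distributivity to the condition $sx=(1-t)x$ for all $x\in X$. Your expansion of $s^2$ via $s^2=(1-t)s$ is harmless but unnecessary (the $s^2z$ terms cancel directly, as in the paper), and your explicit separation of the two directions and care about the meaning of ``$s=1-t$ on $X$'' are welcome clarifications.
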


\begin{proof}
Let $X$ be a $(t,s)$-rack. Then
\[x\tr(y\tr z)=tx+s(ty+sz) =tx+tsy+s^2z\]
while \[(x\tr y)\tr (x\tr z)=t(tx+sy)+s(tx+sz)=(t^2+st)x+tsy+s^2z.\]
Then $x\tr(y\tr z)=(x\tr y)\tr (x\tr z)$ if and only if $(ts+t^2)x=tx$ for
all $x\in X$, i.e., iff $t^2x=t(1-s)x$, which implies $tx=(1-s)x$ and 
hence $sx=(1-t)x$. We then have a $\mathbb{Z}[t^{\pm 1}]$-module structure
on $X$ induced by taking the quotient of $\ddot\Lambda$ by the ideal 
generated by $s-(1-t)$, and the $(t,s)$-rack operation on $X$ becomes
\[x\tr y=tx+sy=tx+(1-t)y\] 
and $X$ is an Alexander quandle. 
\end{proof}

Our next observation notes that $(t,s)$-racks contain Alexander quandles
not just in the categorical sense, but literally:

\begin{proposition}
Let $X$ be a rack. The subset $Q(X)\subset X$ of all elements of
$X$ of rack rank $N=1$ is a quandle, known as the \textit{maximal subquandle}
of $X.$ If $X$ is a $(t,s)$-rack, then $Q(X)$ is an Alexander quandle.
\end{proposition}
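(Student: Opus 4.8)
The plan is to dispatch the two assertions in turn: first that $Q(X)$ is a subrack of $X$ (and therefore, by construction, a quandle), and then that in the $(t,s)$-setting this subquandle is Alexander.

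For the general statement I would begin from the observation that $x\in Q(X)$ exactly when $\pi(x)=x$, since rack rank $1$ for $x$ means $\pi^{1}(x)=x$. The key identity to establish is
\[
\pi(x\tr y)=\pi(x)\tr y\qquad\text{for all }x,y\in X,
\]
which drops straight out of axiom (ii) with its first two arguments set equal: $(x\tr x)\tr y=(x\tr y)\tr(x\tr y)$, whose left-hand side is $\pi(x)\tr y$ and whose right-hand side is $\pi(x\tr y)$. Closure of $Q(X)$ under $\tr$ is then immediate: if $\pi(x)=x$ then $\pi(x\tr y)=\pi(x)\tr y=x\tr y$. For closure under $\tr^{-1}$ I would put $w=x\tr^{-1}y$, note $w\tr y=x$ by axiom (i), apply the identity to obtain $\pi(w)\tr y=\pi(x)=x=w\tr y$, and then invoke the injectivity of the map $\,\cdot\tr y$ (again axiom (i)) to conclude $\pi(w)=w$, i.e. $w\in Q(X)$. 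Since a subrack inherits its kink map, every element of $Q(X)$ has rack rank $1$ inside $Q(X)$ as well, so $Q(X)$ is a quandle; the same inherited-kink-map remark shows that any subquandle of $X$ is contained in $Q(X)$, which justifies the name \emph{maximal subquandle}.

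Now suppose $X$ is a $(t,s)$-rack, so $\pi(x)=(t+s)x$. Then $Q(X)=\{x\in X:(t+s)x=x\}=\ker\big((t+s-1)\colon X\to X\big)$, and this is a $\ddot\Lambda$-submodule of $X$ because multiplication by $t+s-1$ is $\ddot\Lambda$-linear. On this submodule the element $t+s-1$ acts as zero; since $t+s-1=s-(1-t)$, that says precisely that $s$ acts as $1-t$ on $Q(X)$, so the $\ddot\Lambda$-action on $Q(X)$ factors through $\ddot\Lambda/(s-(1-t))$. As $s^{2}-(1-t)s=s\big(s-(1-t)\big)$ lies in the ideal $(s-(1-t))$, we have $\ddot\Lambda/(s-(1-t))\cong\mathbb{Z}[t^{\pm1}]$, so $Q(X)$ is a $\mathbb{Z}[t^{\pm1}]$-module, and on it the rack operation reads $x\tr y=tx+sy=tx+(1-t)y$, which is exactly the Alexander quandle operation. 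Hence $Q(X)$ is an Alexander quandle.

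I do not expect a genuine obstacle. The only point needing a little care is closure under $\tr^{-1}$, where there is no direct analogue of $\pi(x\tr y)=\pi(x)\tr y$ and one must route through injectivity of $\,\cdot\tr y$ as above; alternatively one can note that each $f_{y}\colon x\mapsto x\tr y$ is a rack automorphism (bijective by axiom (i), a homomorphism by axiom (ii)), so by the corollary that rack isomorphisms conjugate kink maps it commutes with $\pi$, and both closure statements follow simultaneously. The second half is then just bookkeeping with the defining relation of $\ddot\Lambda$.
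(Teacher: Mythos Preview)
Your proof is correct and follows essentially the same route as the paper: closure of $Q(X)$ under $\tr$ via the identity $(x\tr x)\tr y=(x\tr y)\tr(x\tr y)$, and the Alexander structure via $(t+s)x=x\Rightarrow sx=(1-t)x$. You are in fact more careful than the paper on one point: the paper checks closure only under $\tr$ and does not address $\tr^{-1}$, whereas your injectivity argument (or the alternative observation that each $f_y$ is a rack automorphism commuting with $\pi$) handles it cleanly, and your module-theoretic phrasing of the second half makes explicit why $Q(X)$ is genuinely a $\mathbb{Z}[t^{\pm1}]$-module rather than merely satisfying the Alexander formula pointwise.
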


\begin{proof} To see that $Q(X)=\{x\in X\ |\ x\tr x=x\}$ is a subrack, note
that $x,y\in Q$ implies 
\[x\tr y=(x\tr x)\tr y =(x\tr y)\tr (x\tr y)\]
and $x\tr y\in Q$. Then $Q(X)$ is a rack with rack rank $N=1$, so $Q(X)$ is
a quandle.

Now let $X$ be a $(t,s)$-rack. To see that $Q$ is Alexander, note that if 
$x\in Q$ then $(t+s)x=x$ and we have $sx=(1-t)x$. Then for any $x,y\in Q$, 
we have
\[x\tr y=tx+sy=tx+(1-t)y\]
and $Q$ is an Alexander quandle.
\end{proof}

For general racks $Q(X)$ may be empty (none of the quandle axioms
are existentially quantified, so the empty set satisfies the quandle axioms
vacuously), but for $(t,s)$-racks $Q(X)$ always contains at least $0$. 
Indeed, we have

\begin{corollary}\label{q:x}
For any $(t,s)$-rack $X$, $sX$ is a subquandle of $Q(X)$.
\end{corollary}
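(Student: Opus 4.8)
The plan is to check two things: that every element of the form $sx$ lies in $Q(X)$, and that $sX$ is closed under the rack operations, so that $sX$ is a subrack; being a subrack contained in $Q(X)$, it will then automatically be a subquandle of $Q(X)$.

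First I would verify the containment $sX\subseteq Q(X)$. Recall that $Q(X)=\{x\in X\ |\ \pi(x)=x\}$ and that in a $(t,s)$-rack one has $\pi(x)=(t+s)x$. For any $x\in X$, using the defining relation $s^2=(1-t)s$ of $\ddot\Lambda$, we compute
\[\pi(sx)=(t+s)(sx)=tsx+s^2x=tsx+(1-t)sx=sx,\]
so $N(sx)=1$ and $sx\in Q(X)$.

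Next I would note that $sX$ is a $\ddot\Lambda$-submodule of $X$: it is closed under addition since $sx+sy=s(x+y)$, and stable under multiplication by $t$ and by $s$ since $t(sx)=s(tx)$ and $s(sx)=s^2x=s((1-t)x)$. Since the rack operations $x\tr y=tx+sy$ and $x\tr^{-1}y=t^{-1}(x-sy)$ are $\ddot\Lambda$-linear combinations and $t$ is invertible, $sX$ is closed under $\tr$ and under $\tr^{-1}$, hence is a subrack of $X$.

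Putting these together, $sX$ is a subrack of $X$ all of whose elements have rack rank $1$, so $N(sX)=1$ and $sX$ is a quandle; since moreover $sX\subseteq Q(X)$, it is a subquandle of $Q(X)$, as claimed. I do not expect a genuine obstacle here: the whole argument is driven by the single identity $s^2=(1-t)s$ together with the module structure, and the only point needing a moment's care is confirming closure under $\tr^{-1}$ as well as $\tr$, which follows immediately from the invertibility of $t$ on $sX$.
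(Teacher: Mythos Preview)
Your proof is correct and follows essentially the same approach as the paper: both verify $sX\subseteq Q(X)$ via the computation $(t+s)sx=tsx+s^2x=sx$ using $s^2=(1-t)s$, and both check closure of $sX$ under the rack operation. Your version is slightly more thorough in that you phrase closure via the $\ddot\Lambda$-submodule structure and explicitly address $\tr^{-1}$, whereas the paper only checks closure under $\tr$ directly; but the underlying argument is the same.
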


\begin{proof}
To see that $sX$ is closed under $\tr$, note that 
\[sx\tr sy = tsx+s^2y=s(tx+sy)\in sX.\] To see that $sX\subset Q(X)$,
let $x=sx'$; then we have
\[sx'\tr sx'=tsx'+s^2x'=(ts+s^2)x'=sx'\]
and $x\in Q(X)$.
\end{proof}

We note that  $sX$ may be a proper subquandle of $Q(X)$: take for instance
$X=\mathbb{Z}_4$ with $t=3$ and $s=2$; then $t+s=1$ and $Q(X)=X$, but 
$sX=\{0,2\}\subsetneq X$.

\section{\large\textbf{Enhanced link invariants}}\label{sec4}

In this section we define a few enhancements of the rack counting invariant
$\Phi_X^{\mathbb{Z}}$ when $X$ is a $(t,s)$-rack.

For our first enhancement, we note that a $(t,s)$-rack is not 
just a rack, but also has the structure of a $\ddot\Lambda$-module. We can 
use this extra structure to define enhancements of the rack counting invariant. 
Let $T$ be a finite $(t,s)$-rack with rack rank $N$ and let $A_T$ be $T$ considered
as an abelian group. For any subset $S\subset T$, let $AC(S)$ be the additive 
closure of $S$, i.e. the subgroup of $A_T$ generated by $S$. For each 
homomorphism $f:FR(L,\mathbf{w})\to T$ we can use the additive closure of 
the image subrack of $f$, as a signature of $f$
to obtain an enhancement of $\Phi_X^{\mathbb{Z}}$.

\begin{definition}
\textup{Let $X$ be a $(t,s)$-rack and $L=L_1\cup \dots \cup L_c$ an
oriented link of $c$ ordered components. The \textit{additive $(t,s)$-rack
enhanced multiset} of $L$ with respect to $T$ is the multiset of abelian 
groups}
\[\Phi^{ts,+M}_X(L)= 
\left\{AC(\mathrm{Im}(f)) \ | \ f\in\mathrm{Hom}(FR(L,\mathbf{w}),X),\ 
\mathbf{w}\in W\right\}.\]
\textup{For ease of comparison, we also define the \textit{additive 
$(t,s)$-rack enhanced polynomial} of $L$ with respect to $X$ to be}
\[\Phi^{ts,+}_X(L)
=\sum_{\mathbf{w}\in W} \left(
\sum_{f\in\mathrm{Hom}(FR(L,\mathbf{w}),X)}u^{|AC(\mathrm{Im}(f))|}\right)\]
\textup{where $W=(\mathbb{Z}_N)^c$.} 
\end{definition}

Note that for any $f\in\mathrm{Hom}(FR(L,\mathbf{w}),X)$, 
$X$-labeled blackboard-framed Reidemeister moves and $N$-phone cord
moves do not change the image subrack $\mathrm{Im}(f)$, and thus the above
quantities are link invariants. It is clear that the rack counting invariant 
can be obtained from either form of the enhanced invariant by taking the 
cardinality in the multiset case or by evaluating $u=1$ in the polynomial 
case. We also note that the multiset form of the invariant is stronger
than the polynomial form since the polynomial form forgets the abelian
group structure of the signatures, keeping only their cardinalities.

In the proof of proposition \ref{2n} we illustrate a method for computing 
$\Phi^{ts,+}_X$ by computing $\Phi^{ts,+}_X(L)$ for all $(2,n)$-torus links
for a choice of $(t,s)$-rack $X$.

\begin{proposition}\label{2n}
\textup{Let $X=\mathbb{Z}_4$ with $(t,s)$-rack operation $x\tr y=x+2y$. Then
the $(2,n)$ torus link $T_{(2,n)}$ has $\Phi_{X}^{ts,+}$ values given by}
\[
\Phi_{X}^{ts,+}(T_{(2,n)})=\left\{
\begin{array}{ll}
4u+12u^2+20u^4, & n\equiv 0 \ \mathrm{mod}\ 4, \\
2u+2u^2+2u^4, & n\equiv 1,3 \ \mathrm{mod}\ 4, \\
4u+12u^2+4u^4, & n\equiv 2 \ \mathrm{mod}\ 4. \\
\end{array}
\right.
\]
\end{proposition}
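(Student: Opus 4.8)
The plan is to compute the homomorphism sets $\mathrm{Hom}(FR(T_{(2,n)},\mathbf{w}),X)$ directly by exploiting the fact that $X=\mathbb{Z}_4$ with $x\tr y = x+2y$ has rack rank $N=2$ (since $\pi(x)=(t+s)x=3x$ and $3^2=9\equiv 1 \bmod 4$), so $W=(\mathbb{Z}_2)^2$ and we only need the four framings $(w_1,w_2)\in\{0,1\}^2$ for the two components of $T_{(2,n)}$ (when $n$ is odd, $T_{(2,n)}$ is a knot and $W=\mathbb{Z}_2$). First I would fix a standard diagram of $T_{(2,n)}$ as the closure of the 2-braid $\sigma_1^n$ with framing adjusted by kinks, label the two strands entering the braid by $a,b\in\mathbb{Z}_4$, and track how the labels propagate through the $n$ crossings. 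Since $x\tr y = x+2y$ and $2\cdot 2 = 0$ in $\mathbb{Z}_4$, passing through two consecutive crossings of $\sigma_1$ sends $(a,b)\mapsto(a+2b, b+2(a+2b))=(a+2b,a+3b)$ and iterating shows the labels stabilize into a small number of residue patterns depending on $n\bmod 4$. The closure condition (top labels must equal bottom labels, up to the framing kinks contributing a factor of $\pi=3$ on a component with odd writhe) then becomes a linear system over $\mathbb{Z}_4$ whose solution set I would enumerate case by case in $n\bmod 4$.

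Next, for each solution $(a,b)$ in each framing, I would determine the image subrack $\mathrm{Im}(f)\subset X$ — which is the set of all labels appearing on arcs of the diagram — and then compute its additive closure $AC(\mathrm{Im}(f))$ as a subgroup of $\mathbb{Z}_4$. The subgroups of $\mathbb{Z}_4$ are $\{0\}$, $\{0,2\}\cong\mathbb{Z}_2$, and $\mathbb{Z}_4$, of orders $1,2,4$; I expect the constant labelings (both strands labeled by a fixed element of $2X=\{0,2\}$, which by Corollary \ref{q:x} are exactly the labelings factoring through the subquandle $sX$) to contribute the $u$ and $u^2$ terms, and the "generic" labelings using a generator of $\mathbb{Z}_4$ to contribute the $u^4$ terms. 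Tallying $u^{|AC(\mathrm{Im}(f))|}$ over all $f$ and all $\mathbf{w}\in W$ and collecting by $n\bmod 4$ should produce the three stated polynomials; I would present this as a short table. The coefficient $4$ on $u$ in the $n\equiv 0,2$ cases presumably comes from the four constant labelings $a=b\in\{0,1,2,3\}$ that are valid in the appropriate framing, while the jump to coefficient $20$ on $u^4$ when $n\equiv 0\bmod 4$ reflects that more non-constant labelings survive the closure condition when $n$ is divisible by $4$.

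The main obstacle will be the careful bookkeeping of the framing contributions: the integral counting invariant sums over all $\mathbf{w}\in W$, and a nonzero writhe on a component inserts kink maps that multiply that component's label by powers of $3$, so the closure equations differ in each of the four (or two) framings, and one must be sure to assign each surviving labeling to the correct framing sector before summing. A secondary subtlety is verifying that the image subrack — not merely the pair $(a,b)$ of initial labels — is correctly identified: because $x\tr y = x+2y$ only ever adds elements of $2X$, the full set of arc labels in a labeling starting from $(a,b)$ is contained in the coset $a + \langle 2, a-b\rangle$, and one must check whether that additive closure is $\{0,2\}$ or all of $\mathbb{Z}_4$ depending on the parity of $a$. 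Once these two points are handled, the computation reduces to a finite, if somewhat tedious, enumeration, and the periodicity in $n\bmod 4$ follows from the stabilization of the label-propagation recursion noted above.
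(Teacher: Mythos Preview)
Your plan is essentially the paper's own approach: present $T_{(2,n)}$ as the closure of $\sigma_1^n$, note rack rank $N=2$, observe period-$4$ behavior of the label propagation, solve the resulting linear closure systems over $\mathbb{Z}_4$ in each framing sector, and tally additive closures. One small correction to your heuristics: the $4u$ term in the even cases does not come from four constant labelings $a=b\in\{0,1,2,3\}$ in a single framing (the labelings with $a=b=1,3$ have additive closure $\mathbb{Z}_4$, not $\{0\}$), but rather from the single all-zero labeling appearing once in each of the four framing sectors --- keep this in mind when you do the tabulation.
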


\begin{proof}
Let $X$ be the $(t,s)$-rack $X=\mathbb{Z}_4$ with $t=1$ and $s=2$.
Recall that the $(2,n)$ torus link $T_{(2,n)}$ is the closure of the 2-strand 
braid with $n$ positive twists. We first note that the set of rack labelings
of $T_{(2,n)}$ is periodic with period 4:
\[\includegraphics{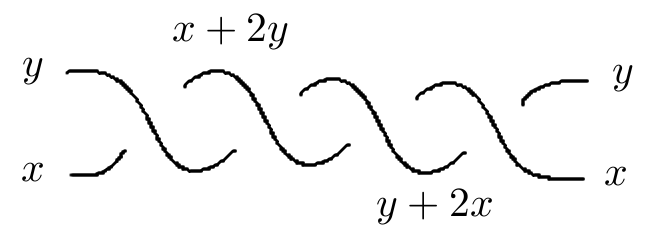}.\]

There are four cases; we will show the first two. Closing the braid
gives us a system of equations in $\mathbb{Z}_4$; each solution to this
system determines a valid rack labeling. 

When 
$n\equiv 0\ \mathrm{mod}\ 4$, $T_{(2,n)}$ is a two-component link and we 
need to stabilize once on each component (the braid version of the 
Reidemeister I move) in order to get a complete set of
writhes mod $2$:
\[\begin{array}{|lllll|} \hline
\mathrm{Writhe\ vector} &
\mathrm{Diagram} & \mathrm{System} & \mathrm{Reduces\ to} 
& \mathrm{Contribution} \\ \hline
(0,0) &
\raisebox{-0.25in}{\includegraphics{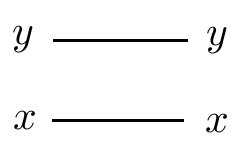}} & 
\begin{array}{rcl}
y & = & y \\
x & = & x \\ 
\end{array}
 &
\begin{array}{rcl}
y & = & y \\
x & = & x \\ 
\end{array}
&
u+3u^2+12u^4 \\ \hline
(0,1) &
\raisebox{-0.5in}{\includegraphics{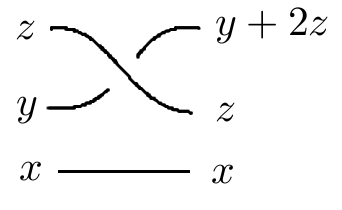}} & 
\begin{array}{rcl}
z & = & y+2z \\
y & = & z \\
x & = & x 
\end{array}
&
\begin{array}{rcl}
y & = & 3y \\
x & = & x \\ 
\end{array}
& u+3u^2+4u^4 \\ \hline
(1,1) &
\raisebox{-0.5in}{\includegraphics{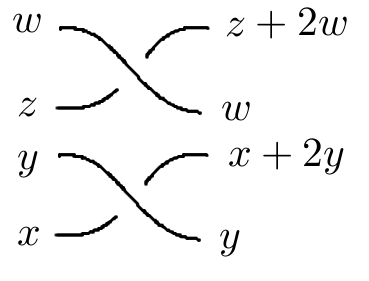}} & 
\begin{array}{rcl}
w & = & z+2w \\
z & = & w \\
y & = & x+2y \\
x & = & y 
\end{array}
&
\begin{array}{rcl}
z & = & 3z \\
x & = & 3x \\ 
\end{array}
& u+3u^2 \\ \hline
\end{array}
\]
The writhe vector $(1,0)$ has the same contribution as
the $(0,1)$ writhe vector due to the symmetry of the link.
Thus, we have $\Phi^{ts,+}_{X}(T_{(2,n)})=4u+12u^2+20u^4$ for 
$n\equiv 0 \ \mathrm{mod}\ 4.$

When 
$n\equiv 1\ \mathrm{mod}\ 4$, $T_{(2,n)}$ is a knot and we need to stabilize
once:
\[\begin{array}{|lllll|} \hline
\mathrm{Writhe} &
\mathrm{Diagram} & \mathrm{System} & \mathrm{Reduces\ to} 
& \mathrm{Contribution} \\ \hline
0 &
\raisebox{-0.25in}{\includegraphics{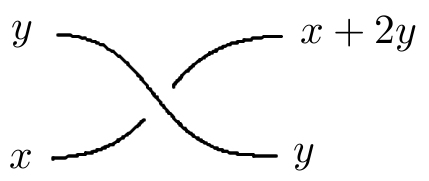}} & 
\begin{array}{rcl}
y & = & x+2y \\
x & = & y \\ 
\end{array}
 &
\begin{array}{rcl}
2x & = & 0 \\
x & = & y \\ 
\end{array}
&
u+u^2 \\ \hline
1 &
\raisebox{-0.5in}{\includegraphics{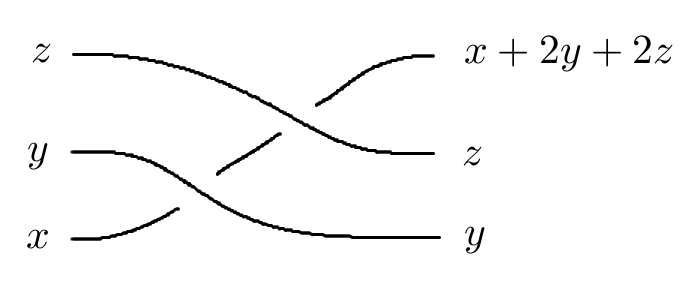}} & 
\begin{array}{rcl}
z & = & x+2y+2z \\
y & = & z \\
x & = & y 
\end{array}
&
x=y=z
& u+u^2+2u^4 \\ \hline
\end{array}
\]
Thus, we have $\Phi^{ts,+}_{X}(T_{(2,n)})=2u+2u^2+2u^4$ for 
$n\equiv 1 \ \mathrm{mod}\ 4.$

Similar computations give us the $n\equiv 2,3 \ \mathrm{mod\ 4}$ cases.
\end{proof}

\begin{remark}
\textup{Repeating the computation in proposition \ref{2n} with the
$(t,s)$-rack structure $X=\ddot\Lambda_2/(t^2+1)$, we get for example}
\[\Phi_{X}^{ts,+}(T_{(2,n)})=4u+22u^2+10u^4\]
\textup{when $n\equiv 0\ \mathrm{mod}\ 4$. This result shows that different
extra structures on the same rack can indeed define different enhanced
invariants, as suggested in \cite{NN}. While this may seem counter-intuitive,
it merely reflects the fact that different extra structures on a rack
(or quandle, biquandle, etc.) differ in which labelings get assigned 
different signatures. In particular, to define $\Phi^{ts,+}_{X}$ it is 
not enough to know the rack matrix of $X$; we need the full 
$\ddot\Lambda$-module structure.}
\end{remark}

Our next example gives a table of values of $\Phi^{ts,+}_X(L)$ for all
prime knots with up to eight crossings and all prime links with up to
seven crossings using our \texttt{python} code, available at
\texttt{http://www.esotericka.org}. In particular, these values
demonstrate that $\Phi^{ts,+}_X(L)$ is stronger in general than 
the integral counting invariant $\Phi^{\mathbb{Z}}_X(L)$.

\begin{example}
\textup{Let $X$ be the $(t,s)$-rack $X=\mathbb{Z}_{12}$ with $t=11$ and $s=2$;
in fact, $X$ is an Alexander quandle. We computed the value of 
$\Phi^{ts,+}_{X}$ on all prime knots with up to eight crossings and all prime 
links with up to seven crossings as listed in the Knot Atlas \cite{KA} using
\texttt{python} code available at \texttt{http://www.esotericka.org}. The
results are collected in the table below. Note in particular that while
$\Phi^{ts,+}_X$ determines $\Phi^{\mathbb{Z}}_X$ (since $\Phi^{\mathbb{Z}}_X$
may be obtained from $\Phi^{ts,+}_X$ by setting $u=1$), the values in this 
example demonstrate that $\Phi^{\mathbb{Z}}_X$ does not determine 
$\Phi^{ts,+}_X$ -- for example, both $L6a1$ and $L6a5$ have 
$\Phi_{X}^{\mathbb{Z}}=144$, but are distinguished by $\Phi^{ts,+}_X$ --  
making $\Phi^{ts,+}_{X}$ a strictly stronger invariant.}
\[
\begin{array}{r|l}
\Phi^{ts,+}_X(L) & L \\ \hline
u+u^2+2u^3+2u^4+2u^6+4u^{12} & 4_1,5_1,5_2,6_2,6_3,7_1,7_2,7_3,7_5,7_6,8_1,8_2,8_3,8_4,8_6,8_7,8_8,8_9, \\
 & 8_{12},8_{13},8_{14},8_{16},8_{17}, \\
u+u^2+8u^3+2u^4+8u^6+16u^{12} & 3_1,6_1,7_4,7_7,8_5,8_{10},8_{11},8_{15}, 8_{19}, 8_{20},8_{21} \\
u+u^2+26u^3+2u^4+26u^6+52u^{12} & 8_{18} \\
u+3u^2+2u^3+4u^4+6u^6+8u^{12} & L2a1, L6a2, L7a6 \\
u+3u^2+2u^3+12u^4+6u^6+24u^{12} & L4a1,L5a1,L7a2,L7a3,L7a4,L7n1,L7n2 \\
u+7u^2+2u^3+8u^4+14u^6+16u^{12} & L6n1,L7a7 \\
u+3u^2+8u^3+4u^4+24u^6+32u^{12} & L6a3, L7a5 \\
u+7u^2+8u^3+8u^4+56u^6+64u^{12} & L6a5 \\
u+3u^2+8u^3+12u^4+24u^6+96u^{12} & L6a1, L7a1 \\
u+7u^2+2u^3+56u^4+14u^6+112u^{12} & L6a4 \\
\end{array}
\]
\end{example}

For our next enhancement, we note that multiplication by $s$ in a 
$(t,s)$-rack is a rack homomorphism which projects $X$ onto the Alexander 
subrack $sX$; if $z=x\tr y=tx+sy$ then we have
\[sx\tr sy = t(sx)+s(sy)=s(tx+sy)=sz\] 
In particular, for any $X$-labeling $f$ of
a link diagram, there is a corresponding $sX$-labeling obtained by
multiplying every label by $s$. Since this corresponding labeling
is preserved by blackboard-framed Reidemeister moves and $N$-phone cord
moves, the projection onto the subrack $s\mathrm{Im}(f)=\mathrm{Im}(s(f))$ 
can be used as a signature of $f$ to define enhancements.
\[
\includegraphics{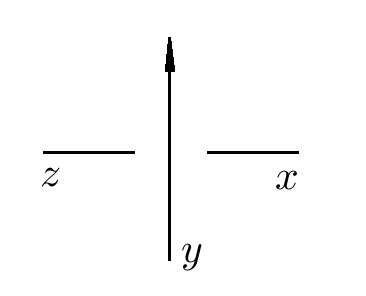} 
\raisebox{0.5in}{$\stackrel{\times s}{\longrightarrow}$} 
\includegraphics{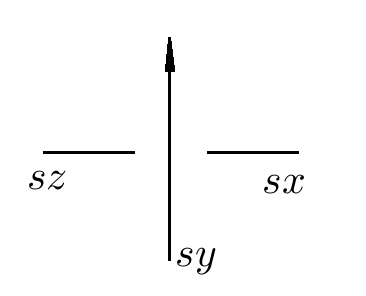} 
\]
Thus we have:

\begin{definition}
\textup{Let $X$ be a $(t,s)$-rack with rack rank $N$, 
$L=L_1\cup \dots \cup L_c$ an oriented link of $c$ ordered components, and
$W=(\mathbb{Z}_N)^c$ the space of writhe vectors mod $N$. For each
rack homomorphism $g:FR(L,\mathbf{w})\to sX$, let 
\[s^{-1}(g)=\{f\in\mathrm{Hom}(FR(L,\mathbf{w}),X)\ :\ g=sf\} \]
be the set of rack labelings of $(L,\mathbf{w})$ by $X$ which project to
$g$ under multiplication by $s$. Then the \textit{$s$-enhanced multiset} 
of $L$ with respect to $X$ is}
\[\Phi^{ts,sM}_{X}(L)=\left\{ s^{-1}(g) : g\in\mathrm{Hom}(FR(L,\mathbf{w}),sX), 
\mathbf{w}\in W\right\}\]
\textup{and the \textit{$s$-enhanced polynomial} of $L$ with respect to $X$ is}
\[\Phi^{ts,s}_{X}(L)=\sum_{\mathbf{w}\in W}\left(\sum_{g\in\mathrm{Hom}(FR(L,X),sX)}
u^{|s^{-1}(g)|}\right).\]
\end{definition}

\begin{remark}
\textup{This enhancement is slightly different from the usual enhancements
in that recovery of $\Phi^{\mathbb{Z}}_X$ requires not evaluating $\Phi^{ts,s}_X$
at $u=1$ but rather summing the product of the coefficient times exponent
for each term in $\Phi^{ts,s}_X$. Note that we can also regard $\Phi^{ts,s}_X$ 
as an enhancement of the quandle counting invariant with respect to the 
Alexander subquandle $sX$; this enhancement is related to the rack module
enhancements defined in \cite{HHNYZ}.}
\end{remark}

\begin{example}
\textup{Taking the 
$(t,s)$-rack 
$X=\mathbb{Z}_4$ with $t=3$ and $s=2$, we computed $\Phi_{X}^{ts,s}$
for all prime links with up to seven crossings as listed in the Knot 
Atlas \cite{KA}. The results are listed in the table below; in particular, 
note that $\Phi^{ts,s}_X$ distinguishes the links $L4a1$ and $L6a5$ which
have the same integral rack counting invariant value $\Phi_{X}^{\mathbb{Z}}=16$.
Since this rack has orbits which are constant action racks, 
the invariant has the same value, $\Phi^{ts,s}_X=2u^2$, for all knots.}
\[\begin{array}{r|l}
\Phi^{ts,s}_X & L \\ \hline
2u+2u^3 & L2a1, L6a2, L6a3, L7a5,L7a6 \\
2u+2u^2+2u^5 & L6a5, L6n1, L7a7 \\
4u+4u^3 & L4a1, L5a1, L6a1, L7a1, L7a2, L7a3, L7a4, L7n1, L7n2 \\
8u+8u^2+8u^5 & L6a4 \\
\end{array}\]
\end{example}

We end this section with an application. In recent works such as \cite{ORS,HS},
a partial ordering on knot types is defined by setting
\[K>K' \iff \exists \phi:\pi_1(S^3\setminus K)\to \pi_1(S^3\setminus K') \]
where $\phi$ is a surjective group homomorphism. Replacing the knot group with
the knot quandle yields a related ordering in which $\phi$ is required to
preserve peripheral structure.

Let us define a partial ordering $\succ$ on $\mathbb{Z}[u]$ by 
\[\sum_{k=0}^n \alpha_ku^k \succ \sum_{k=0}^n \beta_ku^k \iff \alpha_k>\beta_k\]
for all $k=0,\dots,n$. Then we have:

\begin{proposition}
If there exists a surjective homomorphism from the knot quandle
of a knot $K$ onto the knot quandle of $K'$, then 
\[\Phi^{ts,+}_X(K)\succ\Phi^{ts,+}_X(K')\]
for all Alexander quandles $X$.
\end{proposition}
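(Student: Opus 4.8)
The plan is to show that a surjective quandle homomorphism $\psi: \pi_Q(K) \to \pi_Q(K')$ induces, for each framing vector $\mathbf{w}$, an injection from the set of $X$-labelings of $(K', \mathbf{w})$ into the set of $X$-labelings of $(K, \mathbf{w})$ which preserves the signature $AC(\mathrm{Im}(f))$. Since $X$ is an Alexander quandle, its rack rank is $N = 1$, so $W = (\mathbb{Z}_1)^c$ is trivial and $\Phi^{ts,+}_X$ is just $\sum_{f \in \mathrm{Hom}(\pi_Q(\cdot), X)} u^{|AC(\mathrm{Im}(f))|}$; thus it suffices to work with a single framing. First I would recall that the knot quandle $\pi_Q(K)$ is the $N=1$ quotient of $FR(K)$, so a quandle homomorphism $f: \pi_Q(K') \to X$ precomposes with $\psi$ to give a quandle homomorphism $f \circ \psi: \pi_Q(K) \to X$. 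This gives a well-defined map $\Psi^*: \mathrm{Hom}(\pi_Q(K'), X) \to \mathrm{Hom}(\pi_Q(K), X)$.

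The key step is to check that $\Psi^*$ is injective and image-preserving. Injectivity is immediate: if $f \circ \psi = f' \circ \psi$ then, since $\psi$ is surjective, $f = f'$. For the signature, observe that because $\psi$ is surjective we have $\mathrm{Im}(f \circ \psi) = \mathrm{Im}(f)$ as subsets of $X$, hence $AC(\mathrm{Im}(f \circ \psi)) = AC(\mathrm{Im}(f))$ as subgroups of the abelian group underlying $X$. So $\Psi^*$ carries each labeling of $K'$ with signature $AC(\mathrm{Im}(f))$ to a labeling of $K$ with the \emph{same} signature. Consequently, for every abelian group $G$, the number of $X$-labelings of $K$ with $AC(\mathrm{Im}(f)) \cong G$ is at least the number of $X$-labelings of $K'$ with $AC(\mathrm{Im}(f)) \cong G$; grouping these counts by $|G|$ and reading off the coefficient of $u^{|G|}$ in each polynomial gives $\alpha_k \ge \beta_k$ for every $k$, which is exactly $\Phi^{ts,+}_X(K) \succ \Phi^{ts,+}_X(K')$.

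There is one point of care: the partial order $\succ$ as defined uses strict inequality $\alpha_k > \beta_k$ for all $k$, yet the argument above only yields $\alpha_k \ge \beta_k$; moreover the two polynomials might a priori have different degrees. I would address this by reading $\succ$ in the natural reflexive sense (coefficientwise $\ge$), padding the lower-degree polynomial with zero coefficients, which is surely the intended meaning in this application — the constant labeling $0$ always appears on both sides with signature the trivial group, and whenever $\psi$ is a proper quotient one expects strictly more labelings upstairs, so the inequality is typically strict. The only genuinely substantive input is the identification of the combinatorial labeling set $\mathrm{Hom}(FR(L,\mathbf{w}), X)$ with quandle homomorphisms out of $\pi_Q(L)$ in the quandle ($N=1$) case, together with the elementary fact that surjectivity of $\psi$ makes precomposition injective and image-preserving; everything else is bookkeeping.

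The main obstacle I anticipate is purely expository rather than mathematical: making precise the passage between the \emph{framed} fundamental rack $FR(L, \mathbf{w})$ used to define $\Phi^{ts,+}_X$ and the \emph{unframed} knot quandle $\pi_Q(K)$ appearing in the ordering hypothesis, and confirming that peripheral structure (which the quandle ordering preserves) plays no role once we only count homomorphisms into a quandle $X$. Since $X$ is a quandle, $N(X) = 1$ and the framing collapses, so $FR(L, \mathbf{0})$ modulo its $N=1$ relations is exactly $\pi_Q(L)$; this makes the reduction clean, but it should be stated carefully so the reader sees why the framed apparatus of Section~\ref{sec4} specializes correctly here.
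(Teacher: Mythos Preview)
Your proposal is correct and follows essentially the same argument as the paper: precompose with the surjection $\psi$, observe that surjectivity forces $\mathrm{Im}(f\circ\psi)=\mathrm{Im}(f)$, and conclude that every contribution to $\Phi^{ts,+}_X(K')$ is matched by one to $\Phi^{ts,+}_X(K)$. You are in fact more careful than the paper on two points---you make the injectivity of $f\mapsto f\circ\psi$ explicit (the paper uses it tacitly), and you correctly flag that the argument yields only coefficientwise $\ge$ rather than the strict $>$ in the stated definition of $\succ$.
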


\begin{proof}
For any quandle homomorphism $f:Q(K')\to X$, the map $f\circ\phi:Q(K)\to X$
is a quandle homomorphism. Moreover, 
$\mathrm{Im}(f)\subset\mathrm{Im}(f\circ\phi)$,
since $x\in \mathrm{Im}(f)$ says $x=f(a)$ for some $a\in Q(K')$, and 
surjectivity of $\phi$ then says $a=\phi(b)$ for some $b\in Q(K)$; then we have
$x=f(a)=f(\phi(b))=f\circ\phi(b)$ and $x\in\mathrm{Im}(f\circ\phi)$. Conversely,
if $x\in\mathrm{Im}(f\circ\phi)$ then $x=f\circ\phi(b)$ for some $b\in Q(X)$
and $x=f(\phi(b))$ implies $x\in\mathrm{Im}(f)$. Thus we have
$\mathrm{Im}(f)=\mathrm{Im}(f\circ\phi).$

Then every contribution $u^{|AC(\mathrm{Im}(f))|}$ to $\Phi^{ts,+}_X(K')$ is
matched by an equal contribution to $\Phi^{ts,+}_X(K)$, and we have
\[\Phi^{ts,+}_X(K)\succ\Phi^{ts,+}_X(K')\]
as required.
\end{proof}

This proposition means that for Alexander quandles $X$, $\Phi^{ts,+}_X(K)$ can
provide us with obstructions for knot ordering. Indeed, every finite
Alexander quandle $X$ defines its own partial ordering $>_X$ of knots by 
\[K>_X K' \iff \Phi^{ts,+}_X(K)\succ\Phi^{ts,+}_X(K').\]
For instance, in the quandle ordering defined by the Alexander quandle
$X=\ddot\Lambda_{12}/(t-11,s-2)$ in example 6, we have $4_1<_X3_1<_X8_{18}$, 
etc.

\section{\large\textbf{Questions}}\label{sec5}

In this section we collect questions for future research.

\medskip

Let $X$ be a $(t,s)$-rack. When $(t+s)x=x$ so that $X$ is an Alexander quandle, 
the enhanced invariants defined in section \ref{sec4} are also defined
for knotted surfaces in $\mathbb{R}^4$. We have not looked in any detail
at how effective these enhancements may be at distinguishing knotted surfaces
or what relationship they might have with triple point number, etc. This
might prove to be an interesting direction for future investigation.

The enhanced invariants defined in section \ref{sec4} are also well-defined
without modification for virtual knots and links. It is known that certain
writhe-enhanced rack counting invariant values are impossible for classical
links but possible for virtual links, providing a method of detecting
non-classicality. Does anything similar happen with $(t,s)$-rack enhanced
invariants?

The conditions given in proposition \ref{main} seem unsatisfying; is there
a simpler necessary and sufficient condition which can replace $(ii)$, e.g., 
$X$ and $Y$ have conjugate kink maps in $S_{|X|}$ or equal rack polynomials?

In light of the observations at the end of section 3, a $(t,s)$-rack with 
rack rank $N$ of the form $\ddot\Lambda/I$ for an ideal $I$ can be viewed 
as an extension 
of an Alexander quandle $A=\mathbb{Z}[t^{\pm 1}]/I'$ obtained by adjoining a 
variable $s$ and modding out by $s^2-(1-t)s$, $(s+t)^N-1$ and possibly
additional polynomials. What conditions on these polynomials are
required to yield isomorphic $(t,s)$-racks?

\noindent
\textsc{Department of Mathematical Sciences, \\
Claremont McKenna College, \\
850 Columbia Ave., \\
Claremont, CA 91711}

\end{document}